\definecolor{rltred}{rgb}{0.75,0,0}
\definecolor{rltgreen}{rgb}{0,0.5,0}
\definecolor{rltblue}{rgb}{0,0,0.75}
      \theoremstyle{plain}
      \newtheorem{theorem}{Theorem}[section]
      \newtheorem{lemma}[theorem]{Lemma}
      \newtheorem{corollary}[theorem]{Corollary}
      \newtheorem{remark}[theorem]{Remark}
      \newtheorem{proposition}[theorem]{Proposition}
      \newtheorem{definition}[theorem]{Definition}
      \newtheorem{example}[theorem]{Example}
      \newcommand{\R}{{\mathbb R}}
      \newcommand{\A}{\mathcal{A}}
      \newcommand{\B}{\mathcal{B}}
      \newcommand{\Z}{\mathbb{Z}}
      \newcommand{\N}{\mathbb{N}}
      \newcommand{\C}{\mathbb{C}}
      \newcommand{\E}{\mathbb{E}}      
      \newcommand{\MM}{\mathcal{M}}
      \newcommand{\1}{\mathds{1}}   
\begin{document}

\title{Entry time statistics to different shrinking sets}

\author{Italo Cipriano}
\date{April 2016}

\maketitle

\begin{abstract}
We consider $\psi$-mixing dynamical systems $(\mathcal{X},T,\B_{\mathcal{X}},\mu)$ and we find conditions on families of sets $\{\mathcal{U}_n\subset \mathcal{X}:n\in\N\}$ so that $\mu(\mathcal{U}_n)\tau_n$ tends in law to an exponential random variable, where $\tau_n$ is the entry time to $\mathcal{U}_n.$
\end{abstract}

 \section{Introduction}\label{ERTS_intro}

We develop further results about higher order entry times. That is the rate at which points enter to small sets.
Consider a measure preserving dynamical system $(\mathcal{X},\B_{\mathcal{X}},\mu,T)$ where $\mu$ is a finite, invariant and ergodic measure together with a sequence of Borel sets $\{\mathcal{U}_n\}=\{\mathcal{U}_n\}_{n\in\N}$ with $\mathcal{U}_n\subset \mathcal{X}, \mu(U_n)>0$ and for which the sequence $\{\mathcal{U}_n\}$ shrinks to a point. Under an appropriate mixing assumption, one can generally show that for $$\tau_{n}(x)=\tau_{\mathcal{U}_n}(x):=\inf\{k\geq 1: T^k(x)\in \mathcal{U}_n\},$$
the sequence of random variables $X_n:=\mu(\mathcal{U}_n)\tau_{n},$ called (rescaled) \emph{entry time} or \emph{first hitting time}, converges in law to an exponential random variable. The first paper related to this result is \cite{Doeblin40} for continued fractions. More recently such convergence results have been obtained for examples in which $U_n$ are balls or cylinders shrinking to a point: for continuous time Markov chains see \cite{AandB1992,AandB1993}, for expanding maps of the interval see \cite{GS93,GS95,CEbook}, for general $\phi$-mixing processes (consequently also for $\psi$-mixing processes) see \cite{GS97}, for $\alpha$-mixing processes see \cite{Ab2001,AbSa2011}, for Axiom A diffeomorphisms see \cite{Hirata,MH95}, for Gibbs measures on shift spaces see \cite{Pitskel}, for unimodal maps see \cite{Bruin_returntime},
for partially hyperbolic systems see \cite{Dolgopyat2004,ChCo2013}. Some extensions of the classic Poisson limit theorem can be found in \cite{MR3101254} and further related reviews in \cite{AbGal2001,MR1776758,MR3170620}. 
It is not clear however that the limit distribution of the sequence of random variables $X_n$ is still valid if we remove the condition that $\{\mathcal{U}_n\}$ are shrinking to a single point.\\

In this paper we consider the problem of finding conditions on the pair $(\{\mathcal U_n\},\mu)$ so that $X_n$ converges in law to an exponential random variable when  $\mu(\mathcal{U}_n)\to 0$ as $n\to\infty,$ but the sequence $\{\mathcal{U}_n\}$ does not necessarily shrink to a single point (or finitely many points). The conditions we impose on $\mathcal U_n$ and $\mu$ are similar to the used in \cite{Collet2006,MR2165587,Haydn_2014}. In many cases these conditions are easy to verify, this allows us to find new systems satisfying exponential limit distribution.\\

In our main theorem we obtain the convergence of $X_n$ to an exponential random variable for general families of sets $\{\mathcal{U}_n\}$ under some conditions that depend on the return time to $\mathcal{U}_n$ defined by $$\eta_n:=\inf\{\tau_n(x):x\in  \mathcal{U}_n\}.$$ Indeed, let $(\mathcal X,\sigma)$ be a topologically mixing subshift of finite type with $$\mathcal X\subset \prod_{-\infty}^{\infty}\{1,\ldots,a\}=\{1,\ldots,a\}^{\Z}.$$ We consider the pair $(\{\mathcal U_n\},\mu),$ where $\mathcal{U}_n\subset \mathcal X$ is in the sigma-algebra generated by $\prod_{-n}^{n-1} \{1,\ldots,a\}$ and $\mu=\mu_{\text{Parry}}$ is the probability measure of maximum entropy or Parry measure.

\begin{theorem}[Main Theorem]\label{teo:29062015}
 The sequence of random variables $X_n$ converges in law to an exponential random variable if
\begin{enumerate}
\item \label{c1_250216} $\sigma^{-(n+1)}\mathcal{U}_{n+1} \subset \sigma^{-n}\mathcal{U}_n$ for every $n>0$ and $n\mu\left(\mathcal{U}_{\left\lfloor \eta_n/2 \right\rfloor} \right)\to 0$ as $n\to\infty,$
\item \label{c2_250216} the return times are given by $\eta_n=n+k(n)+1,$ where $k:\N\to\N$ is a non decreasing function and $n\mu\left(\mathcal{U}_{k(n)}\right)\to 0$ as $n\to\infty,$ or
\item \label{c3_250216} there exists a sequence $\{\mathcal{V}_n\}$ with $n\mu(\mathcal{V}_{n})\to 0$ as $n\to\infty,$ where $\mathcal{V}_n\supset \mathcal{U}_n$ is a set in the sigma-algebra generated by $\prod_{-n}^{-n+ \left\lfloor  \eta_n/2 \right\rfloor } \{1,\ldots,a\}$ or $\prod_{n- \left\lfloor  \eta_n/2 \right\rfloor }^{n-1} \{1,\ldots,a\}$ for every $n\geq 1.$ 
\end{enumerate}
\end{theorem}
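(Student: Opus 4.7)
I would follow the classical Galves--Schmitt / Abadi blocking strategy for $\psi$-mixing dynamical systems, here adapted to the fact that $\mathcal{U}_n$ is allowed to be any set in the cylinder sigma-algebra of depth $2n$ rather than a cylinder shrinking to a point. Fix $t>0$ and set $N_n:=\lfloor t/\mu(\mathcal{U}_n)\rfloor$; the goal is to show $\mu(\tau_n > N_n)\to e^{-t}$. Since the Parry measure on a topologically mixing subshift of finite type has an exponentially decaying $\psi$-mixing coefficient $\psi(\cdot)$, I pick a block length $g_n$ and a gap length $\Delta_n$ satisfying $g_n\mu(\mathcal{U}_n)\to 0$, $\Delta_n/g_n\to 0$, and $(N_n/g_n)\psi(\Delta_n-2n)\to 0$, and then partition $\{1,\ldots,N_n\}$ into alternating blocks of length $g_n$ and gaps of length $\Delta_n$.

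The standard blocking argument, using $\psi$-mixing across gaps of width $\Delta_n-2n$ between cylinder events of depth $2n$, then yields
\[
\mu(\tau_n>N_n) \;=\; (1-q_n)^{N_n/g_n} + o(1),\qquad q_n:=\mu\Bigl(\bigcup_{j=1}^{g_n} T^{-j}\mathcal{U}_n\Bigr).
\]
The theorem reduces to $q_n = g_n\mu(\mathcal{U}_n)(1+o(1))$, since then $(1-q_n)^{N_n/g_n}\to e^{-t}$. By a Bonferroni-type bound, this in turn reduces to proving
\[
\frac{1}{\mu(\mathcal{U}_n)}\sum_{k=1}^{g_n}\mu\bigl(\mathcal{U}_n\cap T^{-k}\mathcal{U}_n\bigr) \;\longrightarrow\; 0.
\]
I split the sum into three ranges: $1\leq k<\eta_n$, which contributes zero by the definition of the return time; $\eta_n\leq k\leq 2n+\Delta_n$, the short-range window in which the shifted cylinder support of $\mathcal{U}_n$ may still overlap itself and $\psi$-mixing does not directly apply; and $k>2n+\Delta_n$, where $\psi$-mixing gives $\mu(\mathcal{U}_n\cap T^{-k}\mathcal{U}_n)\leq(1+\psi(k-2n))\mu(\mathcal{U}_n)^2$ and hence total contribution $O(g_n\mu(\mathcal{U}_n))\to 0$.

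The main obstacle is the short-range window, and this is where the three hypotheses intervene. In case (i), the nesting $\sigma^{-(n+1)}\mathcal{U}_{n+1}\subset \sigma^{-n}\mathcal{U}_n$ lets me view $\mathcal{U}_n\cap T^{-k}\mathcal{U}_n$ as contained in a product of two translates of $\mathcal{U}_{\lfloor\eta_n/2\rfloor}$, yielding a normalized short-range sum of order $O(n\mu(\mathcal{U}_{\lfloor\eta_n/2\rfloor}))\to 0$. In case (ii), the explicit form $\eta_n=n+k(n)+1$ combined with the Markov property of the Parry measure produces the analogous bound in terms of $\mu(\mathcal{U}_{k(n)})$, giving normalized contribution $O(n\mu(\mathcal{U}_{k(n)}))\to 0$. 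In case (iii), I enlarge $\mathcal{U}_n$ to $\mathcal{V}_n$ and use the inclusion $\mathcal{U}_n\cap T^{-k}\mathcal{U}_n\subset \mathcal{U}_n\cap T^{-k}\mathcal{V}_n$; because $\mathcal{V}_n$ is supported on only $\lfloor\eta_n/2\rfloor$ coordinates on one side of the support of $\mathcal{U}_n$, for $k\geq\eta_n$ the two events are separated in the cylinder alphabet and $\psi$-mixing gives $\mu(\mathcal{U}_n\cap T^{-k}\mathcal{V}_n)\leq C\mu(\mathcal{U}_n)\mu(\mathcal{V}_n)$, yielding short-range contribution $O(n\mu(\mathcal{V}_n))\to 0$.

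The crux of the proof is thus the case-by-case short-range estimate: each of the three hypotheses encodes a different mechanism (geometric nesting, explicit return-time structure, or outer approximation by a thin cylinder) for reducing the second-moment intersection $\mathcal{U}_n\cap T^{-k}\mathcal{U}_n$ to a product of measures that tend to zero fast enough. Once the short-range sum is controlled, the long-range $\psi$-mixing bound and the block factorization combine to give $q_n=g_n\mu(\mathcal{U}_n)(1+o(1))$, and hence the exponential limit $e^{-t}$.
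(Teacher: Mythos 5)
Your proposal is correct in outline, but it reaches the result by a genuinely different outer mechanism than the paper. You use the classical Galves--Schmitt blocking scheme: partition $\{1,\ldots,N_n\}$ into blocks and gaps, factorize across the gaps by $\psi$-mixing, and reduce to the Bonferroni second-moment condition $\sum_{k\le g_n}\mu(\mathcal U_n\cap\sigma^{-k}\mathcal U_n)=o(\mu(\mathcal U_n))$. The paper never introduces blocks; it instead uses the Collet--Eckmann telescoping identity
\[
\mu\{\tau_n>N\}-(1-\epsilon_n)^N=\sum_{q=0}^{N-1}(1-\epsilon_n)^{N-q-1}\bigl(\epsilon_n\mu\{\tau_n>q\}-\mu\{x\in\mathcal U_n:\tau_n(x)>q\}\bigr),
\]
bounds the first $n$ terms trivially by $n\epsilon_n$, and controls the remaining terms by inserting and removing the first $2n$ iterates and applying $\psi$-mixing term by term. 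The blocking route is somewhat more modular (it isolates the second-moment condition as the single thing to verify, and is robust under weaker mixing), while the paper's route avoids choosing block and gap scales and gives explicit error bounds such as $S_2(N)\le 4(n+1)(t+1)\epsilon_n+(t+1)\psi_n+N\epsilon_n^2\sum_{i<n}\psi_i$. Crucially, the heart of the matter is identical in both: the short-return window $\eta_n\le k\le 2n$, where each of the three hypotheses supplies a coordinate gap permitting $\psi$-mixing, exactly as in the paper's final lemma. One point of your sketch to tighten: in case (i) you say $\mathcal U_n\cap\sigma^{-k}\mathcal U_n$ is contained in ``a product of two translates of $\mathcal U_{\lfloor\eta_n/2\rfloor}$''; taken literally this gives the normalized bound $\mu(\mathcal U_{\lfloor\eta_n/2\rfloor})^2/\mu(\mathcal U_n)$, which need not tend to $0$. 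The correct move (as in the paper's Lemma for the first corollary) is to enlarge only \emph{one} factor, bounding $\mu(\mathcal U_n\cap\sigma^{-k}\mathcal U_n)\le\mu(\mathcal U_{\lfloor k/2\rfloor}\cap\sigma^{-k}\mathcal U_n)\le(1+\psi_0)\mu(\mathcal U_{\lfloor k/2\rfloor})\mu(\mathcal U_n)$, which after normalization and summation does give the $O(n\mu(\mathcal U_{\lfloor\eta_n/2\rfloor}))$ you state. With that correction your argument goes through.
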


An application of our method is to the study of Gibbs measures and sets which do not necessarily shrink to single points. Indeed, suppose that we have two subshifts of finite type $\mathcal{X}$ and $\mathcal{Y}\subset\mathcal{X},$ and the family of sets $\{\mathcal{U}_n\},\mathcal{U}_n \subset \mathcal X$ satisfies $\cap_{n=1}^{\infty}\mathcal{U}_n=\{xy:y\in\mathcal{Y}\},$ where $x$ is a forbidden sequence in $\mathcal{Y}.$ We prove that 
for $\mu$ a Gibbs measure of H\"older potential, and under suitable conditions, that depend on $\mu$ and the topological entropy of $(\mathcal{X},\sigma)$ and $(\mathcal{Y},\sigma),$ the sequence $X_n$ converges in law to an exponential random variable. This is closely related to the results in \cite{Collet2006} but our proofs are motivated by \cite{CEbook}. Recently, a necessary and sufficient condition for $X_n$ to converge in law to an exponential random variable when $\mu$ is an ergodic probability measure is established in Theorem 1 in \cite{Zhang}, however, to check this condition requires not straightforward estimations.

\section{Background}
  
 We will formally introduce the definition of subshift of finite type. Let $A$ denote an irreducible and aperiodic $a\times a$ matrix of zeros and ones ($a\geq 2$), i.e. there exists $d\in\N$ for which $A^{d}>0$ (all coordinates of $A^d$ are strictly positive). We call the matrix $A$ transition matrix. We define the subshift of finite type $\mathcal{X}=\mathcal{X}_A \subset \{ 1,\ldots,a\}^{\Z}$ such that
  $$\mathcal{X}:=\{(x_n)_{n=-\infty}^{\infty}:A(x_n,x_{n+1})=1\mbox{ for all  }n\in \Z\}.$$ On $\mathcal{X},$ the shift $\sigma:\mathcal{X}\to \mathcal{X}$ is defined by $\sigma(x)_{n}=x_{n+1}$ for all $n\in \Z.$ 
       For $x\in\mathcal{X}$ and $n,m\in \Z, m>n$ we define the cylinder $$ [x]_n^m:=\{y\in\mathcal{X}:y_i=x_i,\mbox{ for } i\in \{ n,\ldots, m-1\} \}$$
and also denote $x_{[n,m)}:=x_nx_{n+1}\ldots x_{m-1}=(x_k)_{k=n}^{m-1},$ which corresponds to the concatenation of $m-n$ elements in $\{ 1,\ldots, a\}.$ We denote by $\xi_{n}^{m}$ the set of all the cylinders $[x]_n^m$ with $x\in\mathcal{X}.$ In the particular, when $n=0$ we denote $[x]_n^m=: [x]_m$ and $\xi_{n}^{m}=:\xi_{m}.$  Denote by $\MM_{\sigma}$ the space of $\sigma$ invariant probability measure on $\mathcal{X},$ that is the space of probability measures $\mu$ on $\B_{\mathcal{X}}$ such that $\mu(\A)=\mu(\sigma^{-1}\A)$ for every measurable set $\A.$\\
For $\theta \in (0,1),$ we consider the metric $d_{\theta}(x,y)=\theta^{m}$ where $m=\inf\{n\geq 0:x_{n}\neq y_n\mbox{ or }x_{-n}\neq y_{-n}\}$ and $d(x,x)=0$ for every $x\in\mathcal{X},$ we have in particular that $(\mathcal{X},d_{\theta})$ is a complete metric space. We say that $f:\mathcal{X}\to\R$ is continuous if it is continuous with respect to $d_{\theta}.$ Given $f:\mathcal{X}\to\R$ continuous and $m \in \N$ define 
 \[
 V_m(f):= \sup\{|f(x)-f(y)|:x,y\in\mathcal{X} \mbox{ and } x_i=y_i \mbox{ }\forall i\in \{ -m,\ldots, m \}\},
 \]
and the Lipschitz semi-norm
\[
|f|_{\theta}:=\sup\left\{ \frac{V_m(f)}{\theta^m}:m \in\N\right\}.
\]    
Since constant functions all have Lipschitz semi-norm equal to zero, one needs to define the norm on the space of Lipschitz functions by 
\[
\left\Vert f\right\Vert_{\theta}:= |f|_{\theta}+\| f \|,
\]
where $\| f \|:=\sup_{x\in \mathcal{X}} \{|f(x)|\}.$
The space of continuous functions with finite Lipschitz norm is called the space of Lipschitz functions (or $\theta$-Lipschitz functions). Recall that a continuous function is $\alpha$-H\"older for $d_{\theta}$ if and only if it is Lipschitz for $d_{\theta^{\alpha}}.$

\begin{definition} Given a subshift of finite type $(\mathcal{X},\sigma)$ with incidence matrix $A$ of size $a\times a,$ the topological entropy $h$ of $\mathcal{X}$ is defined as the logarithm of the first eigenvalue of $A,$ i.e., for $\lambda:=\max\{|\gamma|:Av=\gamma v,\gamma\in\C,v\in \C^{a}\},$ $h:=\log \lambda.$ On the space of invariant probability measures $\MM_{\sigma}$ there exists one and only one measure $\mu$ (see \cite{lindmarcus}) such that 
\begin{equation}\label{eq1_16_09_2015}
h=-\lim_{n\to\infty}\frac{1}{n}\sum_{\mathcal W\in\xi_n}\mu(\mathcal W)\log \mu (\mathcal W).
\end{equation}
The measure $\mu\in\MM_{\sigma}$ that achieves the supremum in (\ref{eq1_16_09_2015}) is the called measure of maximum entropy or Parry measure, that we denote by $\mu_{\text{Parry}}.$  
\end{definition}

Suppose that we have a subshift of finite type $(\mathcal X,\sigma)$ and an invariant probability measure $\mu$ on $\B_{\mathcal{X}}.$ We will need the following mixing condition on the measure $\mu.$

\begin{definition}
The measure preserving dynamical system $(\mathcal{X},\B_{\mathcal{X}},\mu,\sigma)$ is $\psi$-mixing if for $\mathcal U$ in the $\sigma$-algebra generated by $\{ 1,\ldots, a\}^{n}$ and $\mathcal V$ in the $\sigma$-algebra generated by $\{ 1,\ldots,a\}^{*}:=\cup_{n\in\N}\{ 1,\ldots,a\}^n$ as the `gap' $\triangle\to\infty:$ 
$$
\sup_n\sup_{\mathcal U,\mathcal V} \left| \frac{\mu( \mathcal{U}\cap \sigma^{-\triangle-n}\mathcal{V})}{\mu(\mathcal{U})\mu(\mathcal{V})} -1 \right|=\psi_\triangle\to 0. 
$$
\end{definition}

Let us recall some basic definitions on probability theory that we will use.

\begin{definition}
Let $(\Omega,\mathcal{F},\mathbb{P})$ be a probability space, i.e. $\Omega$ is a set, $\mathcal{F}$ a sigma-algebra on $\Omega$ and $\mathbb{P}$ is a probability measure on $(\Omega,\mathbb{F}).$ The expectation with respect to $\mathbb{P}$ of a measurable function $f: \Omega \to \R$ is defined by
$$
\E(f)=\E_{\mathbb{P}}(f):=\int_{\Omega} f d\mathbb{P}.
$$
\end{definition}

\begin{definition}
A random variable $X:\Omega \to\R$ on a probability space $(\Omega,\mathcal{F},\mathbb{P})$ is said to be an exponential random variable with parameter $\lambda$ if it has cumulative distribution
$$F_X(t):=\mathbb{P}\{w\in \Omega: X(w)\leq t\}= \begin{cases} 
1-e^{-\lambda t} &\mbox{if } t\geq 0, \\
0  &\mbox{if } t <0.\end{cases}
$$
\end{definition}

\begin{definition}
A sequence of random variables $\{X_n\}$ on a probability space $(\Omega,\mathcal{F},\mathbb{P})$ is said to converge in law to an exponential random variable of parameter $1$ if for every $t\in\R,$ its cumulative distribution converges to the cumulative distribution $F_X(t),$ of an exponential random variable $X$ of parameter $\lambda=1.$ In other words, 
\begin{equation}\label{NO_trick_one}
\lim_{n\to\infty}|\mathbb{P}\{w\in\Omega: X_n(x)\geq t\}-e^{-t}|=0 \mbox{ for every }t>0.
\end{equation}
\end{definition}

We introduce the definition of some particular dynamical systems together with a sequence of sets whose measure converges to zero and a sequence of random variables built from the entry times to these sets.

\begin{definition}[$M$-systems]
We define a $M$-system as any system $$(\mathcal{X},\B_{\mathcal{X}},\mu,T,\{\mathcal{U}_n\},\{X_n\})$$ where
\begin{enumerate}
\item the system $(\mathcal{X},\B_{\mathcal{X}},\mu,T)$ is a measure preserving dynamical system
and $\mu$ is an ergodic probability measure;
\item the sequence  
$\{\mathcal{U}_n\}$ is a sequence of Borel sets $\mathcal{U}_n \subset \mathcal X$ such that $\mu(\mathcal{U}_n)>0$ and $\mu(\mathcal{U}_n)\to 0;$ and
\item the sequence $\{X_n\}$ is a sequence of random variables $X_n:\mathcal X\to\R$ defined by $X_n(x):=\mu(\mathcal U_n)\tau_n(x).$ 
\end{enumerate} 
\end{definition}

A particularly useful $M$-system for us is the case that the measure is $\psi$-mixing.

\begin{definition}[$M_a$-systems]
We define a $M_a$-system as any $M$-system $$(\mathcal{X},\B_{\mathcal{X}},\mu,\sigma,\{\mathcal{U}_n\},\{X_n\})$$ where $(\mathcal X,\sigma)$ is a topologically mixing subshift of finite type with $\mathcal X\subset \{1,\ldots,a\}^{\Z}$ for some integer $a>1$ and the measure preserving dynamical system $(\mathcal{X},\B_{\mathcal{X}},\mu,\sigma)$ is $\psi$-mixing where the sequence $\{\psi_{\triangle}\}_{\triangle\in\N}$ is bounded.
\end{definition}

We investigate the convergence in law of $X_n$ in a $M_a$-system. A useful trick that we learned from \cite{CEbook} is to consider
\begin{equation}\label{collet_trick_one}
\lim_{n\to\infty} \left| \mu\left\{x\in\mathcal{X}: \tau _n (x)> \left\lfloor t/\mu(\mathcal U_n) \right\rfloor\right\}-(1-\mu(\mathcal U_n))^{\left\lfloor t/\mu(\mathcal U_n) \right\rfloor
} \right|=0 \mbox{ for every }t>0,
 \end{equation}
 that is equivalent to (\ref{NO_trick_one}).
 
\section{Intermediate results and examples}
  
  We present an important proposition, indeed a few improvements of it will prove Theorem \ref{teo:29062015}.

\begin{proposition}[Main proposition]\label{prop1_10032015}
Let $(\mathcal{X},\B_{\mathcal{X}},\mu,\sigma,\{\mathcal{U}_n\},\{X_n\})$ be a $M_a$-system where $\mathcal{U}_n$ is in the sigma algebra generated by $\prod_{0}^{n-1}\{ 1 ,\ldots, a\}$ for every $n\geq 1.$ If 
\begin{equation}\label{cond10032015:1}
\eta_n=n 
\end{equation} 
and 
\begin{equation}\label{cond10032015:2}
n \mu(\mathcal{U}_n)\to 0 \mbox{ as }n\to\infty. 
\end{equation} 
Then the sequence of random variables $X_n$ converges in law to an exponential random variable of parameter $1.$
\end{proposition}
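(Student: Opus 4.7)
My proof plan follows the block strategy alluded to by (\ref{collet_trick_one}). Set $p = \mu(\mathcal{U}_n)$ and $N = \lfloor t/p \rfloor$. Since $(1 - p)^N \to e^{-t}$ as $p \to 0$, it is equivalent to prove that $\mu(\tau_n > N) \to e^{-t}$, and the overall idea is to cut $[1, N]$ into blocks of length $n$, compute the one-block non-hitting probability exactly using the hypothesis $\eta_n = n$, and then use $\psi$-mixing to multiply across blocks.

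The key preliminary is that $\eta_n = n$ forces $\sigma^{-1}\mathcal{U}_n, \ldots, \sigma^{-n}\mathcal{U}_n$ to be pairwise disjoint: any $x$ in $\sigma^{-j}\mathcal{U}_n \cap \sigma^{-k}\mathcal{U}_n$ with $1 \le j < k \le n$ would make $\sigma^j x \in \mathcal{U}_n$ return in $k - j < n$ steps. This yields the exact identity $\mu(\tau_n > n) = 1 - np$, and it also lets me replace $\{\tau_n > j - 1\}$ by the smaller event $\{\tau_n > j - n\}$ inside $\sigma^{-j}\mathcal{U}_n$ whenever $j \ge n + 1$, since the omitted constraints $\sigma^{-k}\mathcal{U}_n^c$, $k = j - n + 1, \ldots, j - 1$, are automatic on $\sigma^{-j}\mathcal{U}_n$.

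Next I derive a one-step recursion from $N$ to $N + n$. Writing $\mu(\tau_n > N+n) = \mu(\tau_n > N) - \sum_{j = N+1}^{N+n}\mu(\tau_n = j)$ and using the reduction above, each summand equals $\mu(\{\tau_n > j - n\} \cap \sigma^{-j}\mathcal{U}_n)$, whose two factors depend on the disjoint coordinate windows $[1, j - 1]$ and $[j, j + n - 1]$. To get effective decoupling I open a gap of width $\triangle_n$: replacing $\{\tau_n > j - n\}$ by the larger set $\{\tau_n > j - n - \triangle_n\}$ costs only $O(\triangle_n p^2)$ per $j$ --- the extra factor of $p$ comes from a preliminary application of $\psi$-mixing with gap $0$ to each $\{\tau_n = k\}$-slice of the difference set --- and $\psi$-mixing with gap $\triangle_n$ then factorises the remaining product up to relative error $\psi_{\triangle_n}$. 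Combining with the trivial estimate $\mu(\tau_n > j - n - \triangle_n) = \mu(\tau_n > N) + O((n + \triangle_n)p)$ and summing over $j$ gives
$$\mu(\tau_n > N + n) = (1 - np)\,\mu(\tau_n > N) + O\!\bigl(np\,\psi_{\triangle_n} + n \triangle_n p^2 + n^2 p^2\bigr).$$
Iterating $k = \lfloor N/n \rfloor$ times from the exact base $\mu(\tau_n > n) = 1 - np$, and multiplying the error by $k \le t/(np)$, I obtain
$$\mu(\tau_n > kn) = (1 - np)^k + O\!\bigl(t\,\psi_{\triangle_n} + t \triangle_n p + t n p\bigr).$$
The leftover tail of length $N - kn < n$ adds at most $np \to 0$, and a log expansion shows that both $(1 - np)^k$ and $(1 - p)^N$ tend to $e^{-t}$ with difference $O(t n p)$. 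Choosing $\triangle_n \to \infty$ so that $\triangle_n p \to 0$ (possible since $p \to 0$) and $\psi_{\triangle_n} \to 0$ (possible since $\psi_\triangle \to 0$) kills every error term.

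The main obstacle is the per-step error accounting: a naive union bound would price the opening of the gap at $O(n \triangle_n p)$ per step and accumulate to $O(t \triangle_n)$, which does not vanish. It is therefore essential to extract the extra factor of $p$ on the discarded constraints via the preliminary $\psi$-mixing estimate indicated above. Once this improvement is in place, balancing $\triangle_n$ against the rates of $\psi_\triangle \to 0$ and $p_n \to 0$ is a routine diagonal choice, and (\ref{collet_trick_one}) then delivers the exponential limit.
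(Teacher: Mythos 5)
Your argument is correct, and it reaches the conclusion by a genuinely different bookkeeping scheme than the paper, although the two proofs share the same analytic core. The paper works with the telescoping identity of Lemma \ref{lem_10032015:1}, which expresses $\mu\{\tau_n>N\}-(1-\epsilon_n)^N$ as a sum of $N$ one-step defects $\mu\{\tau_n>q+1\}-(1-\epsilon_n)\mu\{\tau_n>q\}$, rewrites each defect exactly via Lemma \ref{lem_10032015:3}, and then estimates it by dropping the constraints at times $1,\dots,n-1$ (using $\eta_n=n$), opening a gap of width exactly $n$, and applying $\psi$-mixing; you instead run an $n$-step block recursion $\mu(\tau_n>N+n)=(1-np)\mu(\tau_n>N)+O(\cdot)$ seeded by the exact identity $\mu(\tau_n>n)=1-np$, which requires the additional (routine) comparison of $(1-np)^{\lfloor N/n\rfloor}$ with $(1-p)^N$. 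The essential ingredients coincide: your pairwise-disjointness of $\sigma^{-1}\mathcal U_n,\dots,\sigma^{-n}\mathcal U_n$ and the resulting replacement of $\{\tau_n>j-1\}$ by $\{\tau_n>j-n\}$ on $\sigma^{-j}\mathcal U_n$ is the paper's use of \eqref{cond10032015:1}; your ``extra factor of $p$'' on the discarded constraints, obtained from $\psi$-mixing at gap $0$ and the boundedness of $\{\psi_\triangle\}$, is exactly inequality \eqref{eq_same_lemma_1} feeding into \eqref{eq_same_lemma_2}; and your gap-$\triangle_n$ factorisation is \eqref{eq_same_lemma_4}. You are right that the naive $O(n\triangle_n p)$ pricing of the gap would be fatal and that the $p^2$ improvement is the crux. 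One simplification: there is no need for a diagonal choice of $\triangle_n$ --- taking $\triangle_n=n$ gives error terms $t\psi_n+tnp+tn^2p^2/(np)$, all of which vanish under \eqref{cond10032015:2} alone, and this is precisely the gap the paper uses; the freedom to take $\triangle_n\ll n$ buys nothing here since the $tnp$ term from the block length is unavoidable in your scheme anyway. What the paper's telescoping route buys is that it never leaves the comparison with $(1-\epsilon_n)^N$, so no final exponential-approximation step is needed; what your route buys is a more transparent probabilistic picture (approximate Bernoulli trials in blocks of length $n$).
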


Notice that the condition $\eta_n=n$ is equivalent to $\eta_n\geq n,$ because of the definition of the sequence of sets $\{\mathcal U_n\}.$ As an application of the proposition we can give the following example, where $\A\sqcup \B$ denotes the disjoint union of the sets $\A$ and $\B.$ 

\begin{example}
Suppose that $\mathcal{X}=\{0,1,2\}^{\Z}$ and 
$$
\mathcal{U}_n=\sqcup_{x_1,\ldots,x_{n-1}\in \{1,2\} } [0, x_1, x_2,\ldots,x_{n-1}]_n.
$$
 Let $\mu$ be a Bernoulli probability measure on $\mathcal{X}$ defined by a probability vector $(p_0,p_1,p_2)\in (0,1)^3.$ Then  $(\mathcal{X},\B_{\mathcal{X}},\mu,\sigma,\{\mathcal{U}_n\},\{X_n\})$ is a $M_a$-system and (\ref{cond10032015:1}), (\ref{cond10032015:2}) are satisfied, therefore Proposition \ref{prop1_10032015} applies.
\end{example}

Another application is to Gibbs measures.
 
\begin{definition}[Gibbs measures]\label{Gibbs_measure}
Given a subshift of finite type $\mathcal X,$ we say that a probability measure $\mu$ on $\B_{\mathcal{X}}$ is a Gibbs measure (or an equilibrium state) of H\"older potential $\phi:\mathcal X\to \R $ if there is $c_1,c_2>0$ and $P\geq 0$ such that 
$$
c_1\leq \frac{\mu([x]_m)}{\exp\left(-Pm + S^{\sigma}_m\phi(x)\right)}\leq c_2
$$
for every $x\in \mathcal X$ and $m\geq 0,$ where $S^{\sigma}_m\phi(x):=\sum_{k=0}^{m-1}\phi(\sigma^k x).$
\end{definition} 

\begin{remark} Gibbs measures of H\"older potential are $\psi$-mixing, see \cite{Rufus}.
\end{remark}

Before giving an example in the case of Gibbs measures let us mention that Proposition \ref{prop1_10032015} gives a condition that depends on the pressure.

\begin{remark} \label{rem290615}
Let $(\mathcal X,\sigma)$ be a subshift of finite type and $\mathcal U_n= [x^1]_n\sqcup \cdots \sqcup[x^{m_n}]_n,$ where $x^1,\ldots,x^{m_n}\in \mathcal{X},$ for every $n\geq 1.$ If $\mu$ is a Gibbs measure on $\mathcal X$ of H\"older potential $\phi:\mathcal X\to\R,$ then
\begin{equation}\label{eq12_feb_16}
0 \leq \mu(\mathcal U_n)\leq m_n\exp\left(-Pn+\|S_n^{\sigma}\phi\| \right),
\end{equation}
for some constants $c>0.$ In particular, we deduce that the hypothesis
$$
nm_n\exp\left(-Pn+\|S_n^{\sigma}\phi\| \right) \to 0 \mbox{ as }n\to\infty 
$$
is enough in order to satisfy the hypothesis (\ref{cond10032015:2}).
\end{remark}

\begin{example}
Let $A$ be an irreducible and aperiodic matrix with entries $0$ and $1$ of size $a\times a$ with $ a>2$ and let $B$ be a submatrix of $A$ of size $b\times b$ with $b\in \{  2,\ldots,a-1 \} .$ Denote by $\lambda_A$ and $\lambda_B$ the Perron eigenvalues of $A$ and $B,$ respectively. Consider the subshift of finite type  $\mathcal{X}_A\subset \{ 1 ,\ldots, a\}^{\Z}$ and $\mathcal{X}_B\subset \mathcal{X}_A.$ Suppose without lost of generality that $\mathcal{X}_B\subset \{ 1 ,\ldots, b\}^{\Z}.$ Define for $n\in \N,$
$$
\mathcal{U}_n= \sqcup_{x_1,\ldots,x_{n-1}\in \{1,2,\ldots b\} } [a x_1\cdots x_{n-1}]_{n} ,
$$
then the sequence of random variables $X_n:=\mu(\mathcal U_n)\tau_n$ converges in law to an exponential random variable of parameter $1$ for any equilibrium state with H\"older potential $\phi$ such that $3\left\Vert\phi \right\Vert < \lambda_A-\lambda_B.$ As (\ref{cond10032015:2}) is clearly satisfied, then Proposition \ref{prop1_10032015} applies.
\end{example}

We extend Proposition \ref{prop1_10032015} in order to allow short return times. In this case we do need to care about considering shrinking sets, unlike in previous construction.

\begin{definition}
We say that a sequence of sets $\{\mathcal{U}_n\}$ is a sequence of shrinking sets if  $\mathcal{U}_n\supset \mathcal{U}_{n+1}$ for every $n\in\N.$
\end{definition}

We can now state our first corollary.

\begin{corollary}[First corollary]\label{prop2_10032015}Suppose that $(\mathcal{X},\B_{\mathcal{X}},\mu,\sigma,\{\mathcal{U}_n\},\{X_n\})$ is a $M_a$-system where $\mathcal U_n$ is a Borel set in the sigma algebra generated by $\prod_{0}^{n-1}\{1,\ldots,a\}$ for every $n\geq 1.$ If the sequence of sets $\{\mathcal U_n\}$ is shrinking and it satisfies 
\begin{equation}\label{cond20032015:3}
n\mu\left(\mathcal{U}_{\left\lfloor \eta_n/2 \right\rfloor}\right) \to 0 \mbox{ as }n\to\infty, 
\end{equation}
then the sequence of random variables $X_n$ converges in law to an exponential random variable of parameter $1.$
\end{corollary}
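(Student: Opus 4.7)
The plan is to adapt the iterative scheme of Proposition~\ref{prop1_10032015}, relaxing the role of the full cylinder length $n$ to the shorter length $m_n:=\lfloor\eta_n/2\rfloor$. The shrinking hypothesis will be used to replace $\mathcal U_n$ by the larger cylinder $\mathcal U_{m_n}$ whenever two visits to $\mathcal U_n$ at times $0$ and $j\in[\eta_n,n-1]$ would force overlapping cylinder supports, and the quantitative condition $n\mu(\mathcal U_{m_n})\to 0$ is exactly what absorbs the error introduced by this relaxation.

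By the reformulation (\ref{collet_trick_one}) with $t_n:=\lfloor t/\mu(\mathcal U_n)\rfloor$, it suffices to show $|\mu(\tau_n>t_n)-(1-\mu(\mathcal U_n))^{t_n}|\to 0$. Starting from the one-step identity
\begin{equation*}
\mu(\tau_n>k)=\mu(\tau_n>k-1)-\mu\bigl(\{\tau_n>k-1\}\cap\sigma^{-k}\mathcal U_n\bigr),
\end{equation*}
the task reduces to producing a factorization $\mu(\{\tau_n>k-1\}\cap\sigma^{-k}\mathcal U_n)=\mu(\tau_n>k-1)\mu(\mathcal U_n)+\varepsilon_n(k)$ with $\sum_{k=1}^{t_n}|\varepsilon_n(k)|=o(1)$. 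I would split the constraints defining $\{\tau_n>k-1\}$ into a \emph{decoupled zone} $i\in[1,k-n]$, which is handled exactly as in Proposition~\ref{prop1_10032015} since the cylinder supports are already disjoint, and an \emph{overlap zone} $i\in[k-n+1,k-1]$, which is the new difficulty.

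On the overlap zone I would first treat the case $m_n\le n$ and use the shrinking inclusion $\mathcal U_n\subset\mathcal U_{m_n}$ to weaken the constraint at time $i$ to $\sigma^{-i}\mathcal U_{m_n}$. After this relaxation the supports $[i,i+m_n-1]$ and $[k,k+n-1]$ are disjoint with separation at least $\eta_n-m_n\ge m_n$, so the boundedness of $\{\psi_\triangle\}$ in the $M_a$-system yields
\begin{equation*}
\mu\bigl(\sigma^{-i}\mathcal U_{m_n}\cap\sigma^{-k}\mathcal U_n\bigr)\le C\,\mu(\mathcal U_{m_n})\,\mu(\mathcal U_n).
\end{equation*}
Summing over the $O(n)$ overlap indices produces a per-step error of $O(n\mu(\mathcal U_{m_n})\mu(\mathcal U_n))$, which accumulates over $t_n=O(\mu(\mathcal U_n)^{-1})$ iterations to a total of $O(n\mu(\mathcal U_{m_n}))=o(1)$ by hypothesis. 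In the opposite regime $m_n>n$, shrinking instead gives $\mathcal U_{m_n}\subset\mathcal U_n$ and $\eta_n\ge n$, so the natural $\psi$-mixing gap is already at least $n$ and the overlap zone collapses; the scheme of Proposition~\ref{prop1_10032015} then applies more directly.

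The main obstacle is keeping the bookkeeping of $\psi$-mixing in the overlap zone clean: the relaxation $\mathcal U_n\leadsto\mathcal U_{m_n}$ is affordable only because each $i$ in the overlap contributes a genuine $\psi$-gap of at least $m_n$, and this is precisely the reason the hypothesis is phrased with $m_n=\lfloor\eta_n/2\rfloor$ rather than, say, $\eta_n$ itself. Once the error bound is secured, iterating the factorization yields $\mu(\tau_n>t_n)=(1-\mu(\mathcal U_n))^{t_n}+o(1)$, and (\ref{collet_trick_one}) concludes.
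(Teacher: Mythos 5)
Your proposal is correct and follows essentially the same route as the paper: the paper likewise reduces to the telescoped sum of Proposition~\ref{prop1_10032015} and isolates an extra term coming from the short-return indices $i\in\{\eta_n,\ldots,n-1\}$, which it bounds in Lemma~\ref{lem2:21Mar} by replacing one copy of $\mathcal U_n$ with the shorter cylinder $\mathcal U_{\lfloor i/2\rfloor}\supset\mathcal U_n$ to create a $\psi$-mixing gap of at least $\lfloor\eta_n/2\rfloor$, yielding the per-step error $n\epsilon_n\epsilon_{\lfloor\eta_n/2\rfloor}(1+\psi_{\lfloor\eta_n/2\rfloor})$ that hypothesis (\ref{cond20032015:3}) absorbs after summing over $N\sim t/\epsilon_n$ terms. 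Your use of the fixed index $m_n=\lfloor\eta_n/2\rfloor$ instead of $\lfloor i/2\rfloor$, and your dropping of constraints near the terminal rather than the initial occurrence of $\mathcal U_n$, are equivalent by monotonicity of the shrinking sequence and invariance of $\mu$.
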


\begin{example}
Let us choose $x\in \mathcal{X}$ and consider $\{\mathcal{U}_n\}=\{[x]_n:n\in\N\}.$ If $n\mu\left([x]_{\left\lfloor \eta_n/2 \right\rfloor}\right)\to 0,$ then the sequence of random variables $X_n:=\mu([x]_n)\tau_n$ converges in law to an exponential random variable of parameter $1.$
\end{example}

\begin{example}
In particular, for the measure of maximum entropy $\mu_{\text{Parry}}.$ If $\{\mathcal U_n\}$ is a shrinking sequence and $n\mu_{\text{Parry}}(\mathcal U_{\left\lfloor \eta_n/2 \right\rfloor})\to 0,$ then $X_n:=\mu_{\text{Parry}}(\mathcal U_n)\tau_n$ converges in law to an exponential random variable of parameter $1.$ This result is not sharp, but to have certain control on $\eta_n$ is necessary.
\end{example}

\begin{example}
Suppose that $(\mathcal{X},\sigma)$ is the full shift in two symbols, i.e. its transition matrix $A$ is a two by two matrix with $1$ in each coordinate, and suppose that $\mu$ is the uniform probability measure, i.e. $\mu([x]_n)=2^{-n}$ for every $x\in \mathcal X$ and $n\in\N.$ Then $X_:=\mu(\mathcal U_n)\tau_n$ converges in law to an exponential random variable of parameter $1$ for any sequence of sets $\{\mathcal U_n\}$ such that $\eta_n=\log_2(n)+k_n,$ where $\{k_n\}\subset\N$ is a divergent sequence.
\end{example}

\section{Proofs}

We have separated the proofs of our results into three subsections. In the first we introduce the notation that will be used along the section and we prove Proposition \ref{prop1_10032015}. In the second we prove Corollary \ref{prop2_10032015}. Finally, in the third we combine what we did in  the first two subsections in order to prove Theorem \ref{teo:29062015}. 

\subsection{Main proposition}\label{sub_1_25_4_16}

The goal of this subsection is to prove Proposition \ref{prop1_10032015}. In what follows we suppose that $(\mathcal{X},\sigma, \mu)$ is a subshift of finite type where $\mu$ is a
probability measure on $\B_{\mathcal{X}}$ and that we have a sequence $\{\mathcal{U}_n\}$ of Borel sets $\mathcal{U}_n \subset \mathcal X$ such that $\mu(\mathcal{U}_n)\to 0.$ Instead of (\ref{NO_trick_one}) we consider the equivalent condition (\ref{collet_trick_one}). We prove the assertion (\ref{collet_trick_one}) in several steps. The first is to replace (\ref{collet_trick_one}) by a limit involving the sum of $N$ terms.

\begin{lemma}\label{lem_10032015:1}  For $n\in \N,$ $\epsilon_n=\mu (\mathcal{U}_n)$ and $N=[t/\epsilon_n]$ with $t>0$ we have that 
\begin{equation}\label{eq_5_ene_2015_9}
\mu\{\tau _n >N\}-(1-\epsilon_n)^N=\sum_{q=0}^{N-1} (1-\epsilon_n)^{N-q-1}(\mu\{\tau_n>q+1\}-(1-\epsilon_n)\mu\{\tau_n>q\}).
\end{equation}
\end{lemma}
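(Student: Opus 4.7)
The identity is purely algebraic; the content of the lemma is a telescoping sum, and the measure-theoretic input enters only through the trivial fact that $\tau_n\geq 1$, so $\mu\{\tau_n>0\}=1$.

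My plan is to set, for brevity, $a_q:=\mu\{\tau_n>q\}$ and $b:=1-\epsilon_n$, so the claimed identity becomes
\[
a_N-b^N=\sum_{q=0}^{N-1}b^{N-q-1}(a_{q+1}-b\,a_q).
\]
First I would expand the right-hand side and split it into two sums:
\[
\sum_{q=0}^{N-1}b^{N-q-1}a_{q+1}\;-\;\sum_{q=0}^{N-1}b^{N-q}a_q.
\]
In the first sum I would re-index by $p=q+1$, converting it to $\sum_{p=1}^{N}b^{N-p}a_p$; the second sum is already indexed from $0$ to $N-1$. Since both sums now have the same power structure $b^{N-p}a_p$, everything cancels except the $p=N$ term of the first sum and the $q=0$ term of the second, yielding
\[
b^{0}a_N-b^{N}a_0=a_N-b^N a_0.
\]
Finally I would use that, by the definition of $\tau_n$ as $\inf\{k\geq 1:\sigma^k x\in\mathcal U_n\}$, we always have $\tau_n\geq 1$, hence $a_0=\mu\{\tau_n>0\}=1$. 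Substituting gives exactly $a_N-b^N$, which is the left-hand side.

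There is no real obstacle: this is routine telescoping and the only thing to be careful about is the index shift and the normalization $a_0=1$. I would present it as a two-line calculation rather than as a separate "telescoping lemma". The reason this identity is useful in context is that it reduces the proof of \eqref{collet_trick_one} to controlling the one-step increments $\mu\{\tau_n>q+1\}-(1-\epsilon_n)\mu\{\tau_n>q\}$ uniformly in $q$, which is the quantity on which the $\psi$-mixing hypothesis and the assumption $\eta_n=n$ will bite in the subsequent lemmas.
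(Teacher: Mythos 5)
Your proof is correct and follows essentially the same route as the paper, which disposes of this lemma by observing that the sum telescopes and only the boundary terms survive. Your write-up simply makes the cancellation explicit and correctly records the one non-algebraic ingredient, namely that $\mu\{\tau_n>0\}=1$ because $\tau_n\geq 1$ by definition.
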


The proof of Lemma \ref{lem_10032015:1} consists in noticing that most of the term in the sum of the right hand side of the equation (\ref{eq_5_ene_2015_9}) cancel when summed with other term, the unique terms that do not cancel are indeed the ones on the left hand side of the equation.

For what follows we require additionally that  $\mu$ is an invariant probability measure on $\B_{\mathcal{X}}.$ For $p,q\in \N_0$ we use the notation $\mu\{\tau_p>q\}$ instead of $\mu\{x\in\mathcal{X}: \tau_p(x)>q\}$ and  for every $n\in \N$ we define $\epsilon_n:=\mu (\mathcal{U}_n)$ and $N:=[t/\epsilon_n],$ where $t>0.$\\

For $q \in \{ 0 ,\ldots, N-1\}$ define $$p_q(n):=(1-\epsilon_n)^{N-q-1}(\mu\{\tau_n>q+1\}-(1-\epsilon_n)\mu\{\tau_n>q\}),$$ $S_1(N):=\sum_{q=0}^{n-1}p_q(n)$ and $S_2(N):=\sum_{q=n}^{N-1}p_q(n).$ To obtain (\ref{collet_trick_one}) we will show with the help of a few lemmas that $$\mu\{\tau _n >N\}-(1-\epsilon_n)^N=S_1(N)+S_2(N)\to 0 \mbox{ as }n \mbox{ tends to infinity}.$$
 The second step of our proof is to bound the term $S_1(N).$ We have the following lemma.

\begin{lemma}\label{lem_10032015:2}
For all $n\in \N,$ we have that
$S_1(N)\leq n\epsilon_n.$
\end{lemma}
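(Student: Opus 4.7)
The plan is to bound each summand $p_q(n)$ individually by $\epsilon_n$ and then add up the $n$ terms.

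First I would rewrite the numerator using the identity
\[
\mu\{\tau_n>q+1\} - (1-\epsilon_n)\mu\{\tau_n>q\} = \epsilon_n\mu\{\tau_n>q\} - \bigl(\mu\{\tau_n>q\} - \mu\{\tau_n>q+1\}\bigr).
\]
The second difference on the right is $\mu\{\tau_n=q+1\}\geq 0$, so dropping it gives the upper bound
\[
\mu\{\tau_n>q+1\} - (1-\epsilon_n)\mu\{\tau_n>q\}\;\leq\;\epsilon_n\mu\{\tau_n>q\}\;\leq\;\epsilon_n,
\]
where in the last step I use $\mu\{\tau_n>q\}\leq 1$. Combined with the trivial inequality $(1-\epsilon_n)^{N-q-1}\leq 1$ (valid since $\epsilon_n=\mu(\mathcal U_n)\in [0,1]$ and $N-q-1\geq 0$ for $q\leq n-1\leq N-1$), this yields $p_q(n)\leq \epsilon_n$ for each $q\in\{0,\ldots,n-1\}$.

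Summing the $n$ terms produces $S_1(N)=\sum_{q=0}^{n-1}p_q(n)\leq n\epsilon_n$, which is the desired bound. There is no real obstacle here: the argument is purely a monotonicity estimate on the tail probabilities of $\tau_n$ and does not invoke the $\psi$-mixing hypothesis or the structure of $\mathcal U_n$. The real work of the proof of Proposition \ref{prop1_10032015} will be carried by the companion bound on $S_2(N)$, where mixing and the assumption $\eta_n=n$ are essential; here one only needs that $p_q(n)$ is not too large when $q$ is small, and that the sum is over $n$ indices, so the hypothesis $n\epsilon_n\to 0$ will make $S_1(N)$ negligible in the limit.
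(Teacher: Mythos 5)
Your proof is correct. The final estimate is the same as the paper's --- bound each summand $p_q(n)$ by $\epsilon_n$ after discarding a nonnegative quantity and using $(1-\epsilon_n)^{N-q-1}\le 1$, then sum over the $n$ indices --- but you reach the termwise bound by a different route. The paper derives $p_q(n)\le \epsilon_n\mu\{\tau_n>q\}$ from Lemma \ref{lem_10032015:3}, which rewrites $\mu\{\tau_n>q+1\}-(1-\epsilon_n)\mu\{\tau_n>q\}$ as $\epsilon_n\mu\{\tau_n>q\}-\mu\{x\in\mathcal{U}_n:\tau_n(x)>q\}$ and whose proof uses the invariance of $\mu$; the dropped term there is the measure of the set of points of $\mathcal{U}_n$ that have not yet returned by time $q$. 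You instead drop $\mu\{\tau_n>q\}-\mu\{\tau_n>q+1\}=\mu\{\tau_n=q+1\}$, which is nonnegative simply because $\{\tau_n>q+1\}\subset\{\tau_n>q\}$; this is a purely algebraic rearrangement needing neither invariance nor any structure of $\mathcal{U}_n$. (The two dropped quantities are in fact equal; that equality is exactly the content of Lemma \ref{lem_10032015:3}.) What the paper's route buys is economy: the identity of Lemma \ref{lem_10032015:3}, in the form of equation (\ref{george_number_eq}), is needed again for the much harder bound on $S_2(N)$, so it is established once and reused. What your route buys is a marginally more elementary proof of this particular lemma, valid for any probability measure, as you correctly observe.
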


The proof is a direct consequence of a useful identity in the next lemma that requires the measure $\mu$ to be invariant. We  denote by $\E$ the expectation with respect to $\mu.$

\begin{lemma}\label{lem_10032015:3}
For all $n\in \N,$  we have that for all $q\in \{ 0 ,\ldots, N-1\}$
\[
\mu\{\tau_n>q+1\}-(1-\epsilon_n)\mu\{\tau_n>q\}=\epsilon_n \mu\{\tau_n>q\}-\mu\{x\in \mathcal{U}_n:\tau_n(x)>q\}.
\]
\end{lemma}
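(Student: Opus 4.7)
The plan is to rewrite $\{\tau_n>q+1\}$ as the preimage under $T$ of a convenient set and then apply the invariance of $\mu$. Recall that $\tau_n(x)=\inf\{k\geq 1:T^kx\in\mathcal{U}_n\}$, so
\[
\{\tau_n>q+1\}=\bigcap_{k=1}^{q+1}T^{-k}\mathcal{U}_n^c.
\]
The key observation is that substituting $y=Tx$ shifts the condition from $k=1,\ldots,q+1$ to $j=0,\ldots,q$, so
\[
\{\tau_n>q+1\}=T^{-1}\bigl(\mathcal{U}_n^c\cap\{\tau_n>q\}\bigr).
\]
This identity is where the definition of $\tau_n$ (with $k\geq 1$) really matters: the starting point $y$ may lie in $\mathcal{U}_n$ without affecting the value of $\tau_n(y)$, so the event $\{\tau_n>q\}$ genuinely refers to $T y,\ldots,T^q y$.

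Applying $T$-invariance of $\mu$ to the displayed identity I get
\[
\mu\{\tau_n>q+1\}=\mu\bigl(\mathcal{U}_n^c\cap\{\tau_n>q\}\bigr)=\mu\{\tau_n>q\}-\mu\bigl(\mathcal{U}_n\cap\{\tau_n>q\}\bigr).
\]
Now subtracting $(1-\epsilon_n)\mu\{\tau_n>q\}$ from both sides and using $\epsilon_n=\mu(\mathcal{U}_n)$ yields
\[
\mu\{\tau_n>q+1\}-(1-\epsilon_n)\mu\{\tau_n>q\}=\epsilon_n\mu\{\tau_n>q\}-\mu\{x\in\mathcal{U}_n:\tau_n(x)>q\},
\]
which is exactly the claimed identity.

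There is no serious obstacle here; the whole argument is a one-line consequence of invariance once the right rewriting is in place. The only subtle point is making sure that the set shift from $\{\tau_n>q+1\}$ to $\mathcal{U}_n^c\cap\{\tau_n>q\}$ is really an equality of sets (not just an inclusion), which uses the fact that $\tau_n$ counts entries starting at time $1$ rather than time $0$. Once this is clear, the rest is just an arithmetic rearrangement of the two terms produced by splitting $\{\tau_n>q\}$ according to whether the starting point belongs to $\mathcal{U}_n$ or not.
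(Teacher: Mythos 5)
Your proof is correct and rests on exactly the same two ingredients as the paper's: the decomposition of $\{\tau_n>q\}$ according to whether the initial point lies in $\mathcal{U}_n$, and the $\sigma$-invariance of $\mu$ applied to shift the time window by one step. The only difference is presentational — you phrase it as the set identity $\{\tau_n>q+1\}=T^{-1}(\mathcal{U}_n^c\cap\{\tau_n>q\})$, while the paper carries out the equivalent computation with expectations of products of indicator functions — so there is nothing to add.
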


\begin{proof}
The result is direct from the definitions of the sets $\{\tau_n>q+1\}$ and $\{\tau_n>q\}.$ Indeed, we can write the following identities:
$$
\begin{aligned}
&\mu\{\tau_n>q+1\}-(1-\epsilon_n)\mu\{\tau_n>q\}\cr
&=\E\left( \prod_{i=1}^{q+1}\1_{\mathcal{U}_n^c}\circ \sigma^i  \right)- (1-\epsilon_n)\E\left(\prod_{i=1}^{q}\1_{\mathcal{U}_n^c}\circ \sigma^i  \right)\cr
&=\E\left( \prod_{i=1}^{q+1}\1_{\mathcal{U}_n^c}\circ \sigma^i  \right)- (1-\epsilon_n)\E\left( (\1_{\mathcal{U}_n^c}+\1_{\mathcal{U}_n}) \prod_{i=1}^{q}\1_{\mathcal{U}_n^c}\circ \sigma^i  \right)\cr
&=\E\left( \prod_{i=1}^{q+1}\1_{\mathcal{U}_n^c}\circ \sigma^i  \right)- (1-\epsilon_n)\E\left( \1_{\mathcal{U}_n^c} \prod_{i=1}^{q}\1_{\mathcal{U}_n^c}\circ \sigma^i  \right) -(1-\epsilon_n)\E\left( \1_{\mathcal{U}_n} \prod_{i=1}^{q}\1_{\mathcal{U}_n^c}\circ \sigma^i  \right)\cr
&= \E\left( \prod_{i=1}^{q+1}\1_{\mathcal{U}_n^c}\circ \sigma^i  \right)-\E\left( \1_{\mathcal{U}_n^c} \prod_{i=1}^{q}\1_{\mathcal{U}_n^c}\circ \sigma^i  \right)\cr
&\quad +\epsilon_n\E\left( \1_{U^c_n} \prod_{i=1}^{q}\1_{\mathcal{U}_n^c}\circ \sigma^i  \right) -(1-\epsilon_n)\E\left( \1_{\mathcal{U}_n} \prod_{i=1}^{q}\1_{\mathcal{U}_n^c}\circ \sigma^i  \right)\cr
&=\epsilon_n\E\left( (1-\1_{\mathcal{U}_n}) \prod_{i=1}^{q}\1_{\mathcal{U}_n^c}\circ \sigma^i  \right) -(1-\epsilon_n)\E\left( \1_{\mathcal{U}_n} \prod_{i=1}^{q}\1_{\mathcal{U}_n^c}\circ \sigma^i  \right)\cr
&=\epsilon_n\E\left( \prod_{i=1}^{q}\1_{\mathcal{U}_n^c}\circ \sigma^i  \right) -\E\left( \1_{\mathcal{U}_n} \prod_{i=1}^{q}\1_{\mathcal{U}_n^c}\circ \sigma^i  \right).
\end{aligned}
$$
Therefore we have 
\begin{equation}\label{george_number_eq}
\begin{aligned}
&\mu\{\tau_n>q+1\}-(1-\epsilon_n)\mu\{\tau_n>q\}\cr
& \quad =\epsilon_n\E\left( \prod_{i=1}^{q}\1_{\mathcal{U}_n^c}\circ \sigma^i  \right) -\E\left( \1_{\mathcal{U}_n} \prod_{i=1}^{q}\1_{\mathcal{U}_n^c}\circ \sigma^i  \right),
\end{aligned}
\end{equation}

and this is enough to conclude the proof, because $$\epsilon_n\E\left( \prod_{i=1}^{q}\1_{\mathcal{U}_n^c}\circ \sigma^i  \right) -\E\left( \1_{\mathcal{U}_n} \prod_{i=1}^{q}\1_{\mathcal{U}_n^c}\circ \sigma^i  \right) = \epsilon_n \mu\{\tau_n>q\}-\mu\{x\in \mathcal{U}_n:\tau_n(x)>q\}.$$
\end{proof}

In the third step, and most difficult one we bound $S_2(N).$ This step requires additionally that $\mu$ is $\psi$-mixing. Or equivalently, that $(\mathcal{X},\B_{\mathcal{X}},\mu,\sigma,\{\mathcal{U}_n\},\{X_n\})$ is a $M_a$-system. In order to obtain an upper bound for $S_2(N)$ we will state a lemma with some intermediate bounds.

\begin{lemma}\label{lem_10032015:4}
For all $n,k\in \N,$ we have
\begin{equation}\label{eq_same_lemma_1}
\E(\1_{\mathcal{U}_n}\1_{\mathcal{U}_n}\circ \sigma^{n+k})\leq   \epsilon_n^2 (1+ \psi_k),
\end{equation}
and for $q\in\{ 2n+1 ,\ldots, N-1\}$ we have
\begin{equation}\label{eq_same_lemma_2}
\begin{aligned}
\left|\E\left(\1_{\mathcal{U}_n}\prod_{i=2n}^{q}\1_{\mathcal{U}_n^c}\circ \sigma^i\right)-\E\left(\1_{\mathcal{U}_n}\prod_{i=n}^{q}\1_{\mathcal{U}_n^c}\circ \sigma^i\right) \right| \leq   \epsilon_n^2(n+\sum_{i=0}^{n-1}\psi_i),
\end{aligned}
\end{equation}
\begin{equation}\label{eq_same_lemma_3}
\left|\E\left(\prod_{i=2n}^{q}\1_{\mathcal{U}_n^c}\circ \sigma^i\right)-\E\left(\prod_{i=n}^{q}\1_{\mathcal{U}_n^c}\circ \sigma^i\right) \right|\leq n\epsilon_n
\end{equation}
and 
\begin{equation}\label{eq_same_lemma_4}
\left|\epsilon_n\E\left(\prod_{i=2n}^{q}\1_{\mathcal{U}_n^c}\circ \sigma^i\right)-\E\left(\1_{\mathcal{U}_n}\prod_{i=2n}^{q}\1_{\mathcal{U}_n^c}\circ \sigma^i\right) \right|\leq  \epsilon_n \psi_n.
\end{equation}
\end{lemma}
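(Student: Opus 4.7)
The four estimates all exploit a single fact: since $\mathcal{U}_n$ lies in the sigma algebra of coordinates $\{0,\ldots,n-1\}$, each shifted event $\{\sigma^i x \in \mathcal{U}_n\}$ with $i\geq n$ is measurable with respect to coordinates $\{i,\ldots,i+n-1\}$, and any gap between two such blocks allows the $\psi$-mixing property to decouple them. My plan is to reduce each expectation to a product of two block events separated by an explicit gap, apply $\psi$-mixing once, and finish with $\sigma$-invariance or the trivial bound that each product of indicator functions lies in $[0,1]$.

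\textbf{Inequalities \eqref{eq_same_lemma_1} and \eqref{eq_same_lemma_4}.} For \eqref{eq_same_lemma_1} I would apply the definition of $\psi$-mixing directly to $\mathcal{U}_n$ and $\sigma^{-(n+k)}\mathcal{U}_n$: the two sets are separated by a gap of $\triangle = k$, so that $\mu(\mathcal{U}_n \cap \sigma^{-(n+k)}\mathcal{U}_n) \leq (1+\psi_k)\epsilon_n^2$. For \eqref{eq_same_lemma_4} I would set $B := \bigcap_{i=2n}^{q}\sigma^{-i}\mathcal{U}_n^c$, which is measurable with respect to coordinates $\{2n,\ldots,q+n-1\}$, so $\mathcal{U}_n$ and $B$ are separated by a gap of exactly $n$. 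A single application of $\psi$-mixing then gives $\left|\mu(\mathcal{U}_n \cap B) - \epsilon_n\mu(B)\right| \leq \psi_n\epsilon_n\mu(B) \leq \psi_n\epsilon_n$, using $\mu(B)\leq 1$.

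\textbf{Inequalities \eqref{eq_same_lemma_2} and \eqref{eq_same_lemma_3}.} Both reduce to removing a ``missing'' factor via the split
\[
\prod_{i=n}^{q}\1_{\mathcal{U}_n^c}\circ\sigma^i \;=\; \prod_{i=n}^{2n-1}\1_{\mathcal{U}_n^c}\circ\sigma^i \;\cdot\; \prod_{i=2n}^{q}\1_{\mathcal{U}_n^c}\circ\sigma^i,
\]
so the two differences collapse to one times (a quantity bounded by one) times $\bigl(1 - \prod_{i=n}^{2n-1}\1_{\mathcal{U}_n^c}\circ\sigma^i\bigr)$. A union bound gives $1 - \prod_{i=n}^{2n-1}\1_{\mathcal{U}_n^c}\circ\sigma^i \leq \sum_{i=n}^{2n-1}\1_{\mathcal{U}_n}\circ\sigma^i$. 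For \eqref{eq_same_lemma_3} I would take expectations and use $\sigma$-invariance to conclude $n\epsilon_n$ at once. For \eqref{eq_same_lemma_2}, the difference is dominated by $\sum_{i=n}^{2n-1}\E(\1_{\mathcal{U}_n}\cdot\1_{\mathcal{U}_n}\circ\sigma^i)$; each term is controlled by the already-proved \eqref{eq_same_lemma_1} with $k = i-n$, giving $\epsilon_n^2(1+\psi_{i-n})$, whose sum over $i$ is exactly $\epsilon_n^2\bigl(n + \sum_{j=0}^{n-1}\psi_j\bigr)$.

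\textbf{Main obstacle.} None of the four bounds is genuinely hard; the proof is bookkeeping. The pitfalls I anticipate are (a) identifying the correct $\psi$-mixing gap in each setting, in particular that the gap in \eqref{eq_same_lemma_1} is $k$ and not $n+k$, and the gap in \eqref{eq_same_lemma_4} is $n$ and not $2n$, because $\mathcal{U}_n$ already fills the first $n$ coordinates; and (b) ordering the four items correctly, since \eqref{eq_same_lemma_2} relies on \eqref{eq_same_lemma_1}.
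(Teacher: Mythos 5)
Your proposal is correct and follows essentially the same route as the paper: \eqref{eq_same_lemma_1} and \eqref{eq_same_lemma_4} by a single application of $\psi$-mixing with gaps $k$ and $n$ respectively, and \eqref{eq_same_lemma_2}, \eqref{eq_same_lemma_3} by splitting off the factor $1-\prod_{i=n}^{2n-1}\1_{\mathcal{U}_n^c}\circ\sigma^i$, applying a union bound, and then invoking $\sigma$-invariance or \eqref{eq_same_lemma_1}. The gap identifications you flag as pitfalls are exactly the ones the paper uses.
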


\begin{proof}
Inequality (\ref{eq_same_lemma_1}) requires the $\psi$-mixing condition and inequality (\ref{eq_same_lemma_2}) is a consequence of (\ref{eq_same_lemma_1}).

\item[Proof of (\ref{eq_same_lemma_1})] We can use the $\psi$-mixing condition to conclude that $$\left|\E(\1_{\mathcal{U}_n}\1_{\mathcal{U}_n}\circ\sigma^{n+k})-\epsilon_n^2\right| =\left|\E(\1_{\mathcal{U}_n}\1_{\mathcal{U}_n}\circ\sigma^{n+k})-\mu(\mathcal{U}_n)^2\right| \leq  \epsilon_n^2 \psi_k. $$
Then trivially $$\E(\1_{\mathcal{U}_n}\1_{\mathcal{U}_n}\circ\sigma^{n+k})\leq  \epsilon_n^2 \psi_k+ \epsilon_n^2,$$  
as required.

\item[Proof of (\ref{eq_same_lemma_2})]Let $n,k\in \N$ and $q\in \{  2n+1 ,\ldots, N-1\}.$ We have 
$$
\begin{aligned}
& \left|\E\left(\1_{\mathcal{U}_n}\prod_{i=2n}^{q}\1_{\mathcal{U}_n^c}\circ \sigma^i\right)-\E\left(\1_{\mathcal{U}_n}\prod_{i=n}^{q}\1_{\mathcal{U}_n^c}\circ \sigma^i\right) \right| \cr
&=\E\left(\1_{\mathcal{U}_n} \cdot \prod_{i=2n}^{q}\1_{\mathcal{U}_n^c}\circ \sigma^i \cdot \left( 1-\prod_{i=n}^{2n-1}\1_{\mathcal{U}_n^c}\circ \sigma^i \right) \right) \cr
&\leq \E\left(\1_{\mathcal{U}_n} \cdot \left( 1-\prod_{i=n}^{2n-1}\1_{\mathcal{U}_n^c}\circ \sigma^i \right) \right)\cr
&\leq \sum_{i=n}^{2n-1}\E\left(\1_{\mathcal{U}_n} \1_{\mathcal{U}_n}\circ \sigma^i \right) \leq
 \epsilon_n^2\sum_{i=0}^{n-1}\psi_i+n\epsilon_n^2,
\end{aligned}
$$
where we have used (\ref{eq_same_lemma_1}) in the last inequality.

\item[Proof of (\ref{eq_same_lemma_3})] Let $n,k\in \N$ and $q\in \{  2n+1,\ldots, N-1\}.$ We have 
$$
\begin{aligned} 
&\left|\E\left(\prod_{i=2n}^{q}\1_{\mathcal{U}_n^c}\circ \sigma^i\right)-\E\left(\prod_{i=n}^{q}\1_{\mathcal{U}_n^c}\circ \sigma^i\right) \right|\cr
&=\E\left(\prod_{i=2n}^{q}\1_{\mathcal{U}_n^c}\circ \sigma^i \cdot \left( 1-\prod_{i=n}^{2n-1}\1_{\mathcal{U}_n^c}\circ \sigma^i \right) \right)  \cr
&\leq \E\left( 1-\prod_{i=n}^{2n-1}\1_{\mathcal{U}_n^c}\circ \sigma^i  \right) \leq \sum_{i=n}^{2n-1}\E\left( \1_{\mathcal{U}_n}\circ \sigma^i \right)=n\epsilon_n.
\end{aligned}
$$

\item[Proof of (\ref{eq_same_lemma_4})] Let $n,k\in \N$ and $q\in \{ 2n+1 ,\ldots, N-1\}.$ We have
$$
\begin{aligned}
& \left|\epsilon_n\E\left( \prod_{i=2n}^{q}\1_{\mathcal{U}_n^c}\circ \sigma^i  \right) -\E\left( \1_{\mathcal{U}_n} \prod_{i=2n}^{q}\1_{\mathcal{U}_n^c}\circ \sigma^i  \right)\right|\cr
& \leq \psi_n \epsilon_n \E\left( \prod_{i=2n}^{q}\1_{\mathcal{U}_n^c}\circ \sigma^i  \right) \leq \psi_n \epsilon_n,
\end{aligned}
$$
because of the condition of $\psi$-mixing.

\end{proof}

We can now bound $S_2(N).$

\begin{lemma} For all $n\in \N,$ we have that $$S_2(N)\leq 4(n+1)(t+1)\epsilon_n+(t+1)\psi_n+ N\epsilon_n^2\sum_{i=0}^{n-1}\psi_i.$$
\end{lemma}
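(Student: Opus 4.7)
The plan is to split $S_2(N)=\sum_{q=n}^{N-1}p_q(n)$ at the index $q=2n$ into a short piece $A:=\sum_{q=n}^{2n}p_q(n)$ (only $n+1$ terms) and a long piece $B:=\sum_{q=2n+1}^{N-1}p_q(n)$, treating them with different tools. The $\psi$-mixing hypothesis is needed only for $B$, where it enters through Lemma~\ref{lem_10032015:4}; for $A$ the bound from Lemma~\ref{lem_10032015:3} alone will suffice.

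For the short piece I would use nothing more than Lemma~\ref{lem_10032015:3}, which rewrites $p_q(n)=(1-\epsilon_n)^{N-q-1}[\epsilon_n\mu\{\tau_n>q\}-\mu\{x\in\mathcal U_n:\tau_n(x)>q\}]$. Both quantities in the bracket are bounded by $\epsilon_n$, so $|p_q(n)|\le 2\epsilon_n$ and therefore $|A|\le 2(n+1)\epsilon_n$, which will be absorbed into the $(n+1)(t+1)\epsilon_n$ term of the target bound.

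For the long piece the starting observation is that the hypothesis $\eta_n=n$ forces $\1_{\mathcal U_n}\cdot\prod_{i=1}^{n-1}\1_{\mathcal U_n^c}\circ\sigma^i=\1_{\mathcal U_n}$; substituting this into the identity of Lemma~\ref{lem_10032015:3} gives
\[p_q(n)=(1-\epsilon_n)^{N-q-1}\Big[\epsilon_n\E\big(\prod_{i=1}^q\1_{\mathcal U_n^c}\circ\sigma^i\big)-\E\big(\1_{\mathcal U_n}\prod_{i=n}^q\1_{\mathcal U_n^c}\circ\sigma^i\big)\Big].\]
I would then telescope the bracket through the intermediate quantity $\epsilon_n\E(\prod_{i=2n}^q\1_{\mathcal U_n^c}\circ\sigma^i)$, invoking (\ref{eq_same_lemma_2}) to replace $\prod_{i=n}^q$ by $\prod_{i=2n}^q$ under the factor $\1_{\mathcal U_n}$ (this is the step that produces the $\epsilon_n^2\sum_{i=0}^{n-1}\psi_i$ contribution), (\ref{eq_same_lemma_4}) to decouple $\1_{\mathcal U_n}$ from $\prod_{i=2n}^q\1_{\mathcal U_n^c}\circ\sigma^i$ via $\psi$-mixing across the gap $n$ (this yields the $\epsilon_n\psi_n$ contribution), and (\ref{eq_same_lemma_3}) together with its exact analogue for replacing $\prod_{i=1}^q$ by $\prod_{i=n}^q$ (proved by the same one-line estimate) to absorb the remaining $O(n)\epsilon_n^2$ error. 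The net per-term estimate reads
\[|p_q(n)|\le(1-\epsilon_n)^{N-q-1}\Big[O(n)\epsilon_n^2+\epsilon_n^2\sum_{i=0}^{n-1}\psi_i+\epsilon_n\psi_n\Big].\]

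To conclude I would use $(1-\epsilon_n)^{N-q-1}\le 1$ together with $\#\{2n+1,\ldots,N-1\}\le N$ and the defining inequality $N\epsilon_n\le t+1$, then sum, producing a bound on $|B|$ of the form $O((n+1)(t+1))\epsilon_n+N\epsilon_n^2\sum_{i=0}^{n-1}\psi_i+(t+1)\psi_n$. Combining with the estimate for $|A|$ gives the desired inequality, with the constant in front of $(n+1)(t+1)\epsilon_n$ adjusted (up to $4$) to absorb all the $O(n)\epsilon_n^2$ contributions. The one genuinely delicate point is the bookkeeping in the telescoping step: this is the only place where the $\psi$-mixing assumption is actually used, and once the four inequalities of Lemma~\ref{lem_10032015:4} are in hand the remainder is a matter of summing a geometric-like series and keeping track of the factor $N\epsilon_n\le t+1$.
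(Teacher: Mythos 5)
Your proposal is correct and follows essentially the same route as the paper: the identity of Lemma~\ref{lem_10032015:3} together with $\eta_n=n$, the split of the range at $q=2n$, the telescoping through $\prod_{i=2n}^{q}\1_{\mathcal{U}_n^c}\circ\sigma^i$ using the four estimates of Lemma~\ref{lem_10032015:4}, and the final summation via $N\epsilon_n\le t+1$. The only cosmetic difference is that you bound the short range $q\in\{n,\ldots,2n\}$ directly by $|p_q(n)|\le 2\epsilon_n$ instead of through the $S_{21}+S_{22}$ decomposition; use the sharper $|p_q(n)|\le\epsilon_n$ (a difference of two quantities each lying in $[0,\epsilon_n]$) so that the final constant comes out as $4$ rather than $5$.
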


\begin{proof}
Recall that $S_2(N)=\sum_{q=n}^{N-1} p_q(n).$ From (\ref{george_number_eq}) we have that
$$
\begin{aligned}
p_q(n)&=(1-\epsilon_n)^{N-q-1}\left( \epsilon_n\E\left( \prod_{i=1}^{q}\1_{\mathcal{U}_n^c}\circ \sigma^i  \right) -\E\left( \1_{\mathcal{U}_n} \prod_{i=1}^{q}\1_{\mathcal{U}_n^c}\circ \sigma^i  \right)\right)\\
&\leq \epsilon_n\E\left( \prod_{i=1}^{q}\1_{\mathcal{U}_n^c}\circ \sigma^i  \right) -\E\left( \1_{\mathcal{U}_n} \prod_{i=1}^{q}\1_{\mathcal{U}_n^c}\circ \sigma^i  \right)
\end{aligned}
$$
for every $q\in \{n ,\ldots, N-1\}.$
For a fixed $q$ we bound $p_q(n)$ by the sum of two terms: 
$$
\begin{aligned}
&\epsilon_n\E\left( \prod_{i=1}^{q}\1_{\mathcal{U}_n^c}\circ \sigma^i  \right) -\E\left( \1_{\mathcal{U}_n} \prod_{i=1}^{q}\1_{\mathcal{U}_n^c}\circ \sigma^i  \right)\cr
&\leq \left|\epsilon_n\E\left( \prod_{i=1}^{q}\1_{\mathcal{U}_n^c}\circ \sigma^i  \right)-\epsilon_n\E\left( \prod_{i=n}^{q}\1_{\mathcal{U}_n^c}\circ \sigma^i  \right)\right|\cr
&\quad +\left|\epsilon_n\E\left( \prod_{i=n}^{q}\1_{\mathcal{U}_n^c}\circ \sigma^i  \right) -\E\left( \1_{\mathcal{U}_n} \prod_{i=n}^{q}\1_{\mathcal{U}_n^c}\circ \sigma^i  \right)\right|=: S_{21}(N,q) +S_{22}(N,q).
\end{aligned}
$$
Notice that we have used (\ref{cond10032015:1}) to obtain that $$\E\left( \1_{\mathcal{U}_n} \prod_{i=1}^{q}\1_{\mathcal{U}_n^c}\circ \sigma^i  \right)=\E\left( \1_{\mathcal{U}_n} \prod_{i=n}^{q}\1_{\mathcal{U}_n^c}\circ \sigma^i  \right).$$
Our goal now is to bound $S_{21}(N,q)$ and $S_{22}(N,q).$ For the first term we have the simple inequality $S_{21}(N,q)\leq n\epsilon_{n}^2,$ because
$$
\begin{aligned}
S_{21}(N,q)&\leq \epsilon_{n}\E\left(\left|\prod_{i=1}^{n-1}\1_{\mathcal{U}_n^c}\circ \sigma^i-1 \right|\cdot \prod_{i=n}^{q}\1_{\mathcal{U}_n^c}\circ \sigma^i  \right) \cr
&\leq \epsilon_{n}\E\left(1-\prod_{i=1}^{n-1}\1_{\mathcal{U}_n^c}\circ \sigma^i \right) \leq\epsilon_{n}\E\left(\sum_{i=1}^{n-1}\1_{\mathcal{U}_n}\circ \sigma^i \right) =n\epsilon_{n}^2 .
\end{aligned}
$$
To bound $S_{22}(N,q)$ we use Lemma \ref{lem_10032015:4}.
Suppose that $q>2n,$ then 
$$
\begin{aligned}
S_{22}(N,q)&=\left|\epsilon_n\E\left( \prod_{i=n}^{q}\1_{\mathcal{U}_n^c}\circ \sigma^i  \right) -\E\left( \1_{\mathcal{U}_n} \prod_{i=n}^{q}\1_{\mathcal{U}_n^c}\circ \sigma^i  \right)\right| \cr
&\leq \left|\epsilon_n\E\left( \prod_{i=2n}^{q}\1_{\mathcal{U}_n^c}\circ \sigma^i  \right) -\E\left( \1_{\mathcal{U}_n} \prod_{i=2n}^{q}\1_{\mathcal{U}_n^c}\circ \sigma^i  \right)\right| \cr
&\quad + \epsilon_n\left|\E\left( \prod_{i=n}^{q}\1_{\mathcal{U}_n^c}\circ \sigma^i  \right) -\E\left(\prod_{i=2n}^{q}\1_{\mathcal{U}_n^c}\circ \sigma^i  \right)\right| \cr
&\quad + \left|\E\left( \1_{\mathcal{U}_n} \prod_{i=2n}^{q}\1_{\mathcal{U}_n^c}\circ \sigma^i  \right) -\E\left( \1_{\mathcal{U}_n} \prod_{i=n}^{q}\1_{\mathcal{U}_n^c}\circ \sigma^i  \right)\right|\cr
&=: I_1(N,q)+I_2(N,q)+I_3(N,q). 
\end{aligned}
$$
We have that 
$I_1(N,q)\leq \epsilon_n \psi_n$ by (\ref{eq_same_lemma_4}),
$I_2(N,q)\leq n\epsilon_n^2$ by  (\ref{eq_same_lemma_3}) and 
$I_3(N,q)\leq \epsilon_n^2\sum_{i=0}^{n-1}\psi_i+n\epsilon_n^2$ by  (\ref{eq_same_lemma_2}). In the case $n\leq q\leq 2n$ we can use that 
$S_{22}(N,q)\leq  \epsilon_n.$ Finally,
$$
\begin{aligned}
S_2(N)&\leq \sum_{q=n}^N \left(S_{21}(N,q)+S_{22}(N,q)\right)\cr
&= \sum_{q=n}^N S_{21}(N,q) +\sum_{q=n}^{2n} S_{22}(N,q) + \sum_{q=2n+1}^N S_{22}(N,q)\cr 
&\leq N n\epsilon_n^2+ (n+1)\epsilon_n+N(I_1(N,q)+I_2(N,q)+I_3(N,q))\cr
&\leq \epsilon_n (n(t+1)+(n+1))+ (t+1) \psi_n+ 2(t+1) n\epsilon_n+ N \epsilon_n^2\sum_{i=0}^{n-1}\psi_i,
\end{aligned}
$$
which concludes the proof.
\end{proof}

\subsection{First corollary}\label{sub_2_25_4_16}

The proof of the corollary is very similar to the one of Proposition \ref{prop1_10032015}, but we need to modify some details. To complete the proof we require the following lemma, where $n\in\N$ and $q>n.$

\begin{lemma}\label{lem2:21Mar}
If $\eta_n=n$ then 
$$
\begin{aligned}
&\left| \E\left(\1_{\mathcal{U}_n}\prod_{i=n}^q \1_{\mathcal{U}_n^c}\circ \sigma^i \right)- \E\left(\1_{\mathcal{U}_n}\prod_{i=\eta_n}^q \1_{\mathcal{U}_n^c}\circ \sigma^i \right)\right| =0
\end{aligned}
$$
and if $\eta_n<n$ then
$$
\left| \E\left(\1_{\mathcal{U}_n}\prod_{i=n}^q \1_{\mathcal{U}_n^c}\circ \sigma^i \right)- \E\left(\1_{\mathcal{U}_n}\prod_{i=\eta_n}^q \1_{\mathcal{U}_n^c}\circ \sigma^i \right)\right| \leq n\epsilon_n \epsilon_{\left\lfloor \eta_n/2 \right\rfloor}( 1+ \psi_{\left\lfloor \eta_n/2 \right\rfloor}).
$$
\end{lemma}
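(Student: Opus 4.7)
The plan is to dispose of the two cases separately. When $\eta_n=n$ the two products appearing inside the expectations run over the identical index set $\{n,\ldots,q\}$, so the integrands coincide pointwise and the difference is zero; there is nothing to prove.

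Suppose now $\eta_n<n$. I would first factor $\prod_{i=\eta_n}^{q}\1_{\mathcal U_n^c}\circ\sigma^i=\prod_{i=\eta_n}^{n-1}\1_{\mathcal U_n^c}\circ\sigma^i\cdot\prod_{i=n}^{q}\1_{\mathcal U_n^c}\circ\sigma^i$, which lets me rewrite the difference as
\[
\E\!\left(\1_{\mathcal U_n}\,\prod_{i=n}^{q}\1_{\mathcal U_n^c}\circ\sigma^i\cdot\Bigl(1-\prod_{i=\eta_n}^{n-1}\1_{\mathcal U_n^c}\circ\sigma^i\Bigr)\right).
\]
Dominating the inner product by $1$ and applying the union bound $1-\prod_{i=\eta_n}^{n-1}\1_{\mathcal U_n^c}\circ\sigma^i\le \sum_{i=\eta_n}^{n-1}\1_{\mathcal U_n}\circ\sigma^i$ reduces the problem to controlling $\sum_{i=\eta_n}^{n-1}\E(\1_{\mathcal U_n}\cdot\1_{\mathcal U_n}\circ\sigma^i)$, a sum with at most $n$ terms.

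The main obstacle is the individual term $\E(\1_{\mathcal U_n}\cdot\1_{\mathcal U_n}\circ\sigma^i)$ for $\eta_n\le i\le n-1$: the cylinder $\mathcal U_n$ sits in coordinates $[0,n-1]$, while $\sigma^{-i}\mathcal U_n$ sits in $[i,i+n-1]$, and these two windows overlap because $i<n$, so the raw $\psi$-mixing bound (\ref{eq_same_lemma_1}) does not apply. The shrinking-sets hypothesis is exactly what resolves this: from $\mathcal U_n\subset\mathcal U_{\lfloor\eta_n/2\rfloor}$ I may replace the first indicator, obtaining $\E(\1_{\mathcal U_n}\cdot\1_{\mathcal U_n}\circ\sigma^i)\le \E(\1_{\mathcal U_{\lfloor\eta_n/2\rfloor}}\cdot\1_{\mathcal U_n}\circ\sigma^i)$. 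Now $\1_{\mathcal U_{\lfloor\eta_n/2\rfloor}}$ depends on only the first $\lfloor\eta_n/2\rfloor$ coordinates, and since $i\ge\eta_n$ the two cylinder windows are separated by a gap of at least $i-\lfloor\eta_n/2\rfloor\ge\eta_n-\lfloor\eta_n/2\rfloor\ge\lfloor\eta_n/2\rfloor$. The $\psi$-mixing inequality then yields $\E(\1_{\mathcal U_{\lfloor\eta_n/2\rfloor}}\cdot\1_{\mathcal U_n}\circ\sigma^i)\le \epsilon_n\,\epsilon_{\lfloor\eta_n/2\rfloor}(1+\psi_{\lfloor\eta_n/2\rfloor})$, and summing the at most $n$ terms produces exactly the bound stated in the lemma.
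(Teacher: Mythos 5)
Your proof is correct and follows essentially the same route as the paper: a union bound reduces the difference to the pair correlations $\E(\1_{\mathcal{U}_n}\cdot\1_{\mathcal{U}_n}\circ\sigma^i)$, which are then controlled by enlarging one copy of $\mathcal{U}_n$ to a set measurable on a shorter coordinate window via the shrinking hypothesis, so that $\psi$-mixing applies across a gap of at least $\left\lfloor \eta_n/2 \right\rfloor$. The only cosmetic difference is that you enlarge directly to $\mathcal{U}_{\left\lfloor \eta_n/2 \right\rfloor}$, whereas the paper uses $\mathcal{U}_{\left\lfloor i/2 \right\rfloor}$ for each $i$ and passes to the worst case at the end.
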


\begin{proof}
\begin{figure}[!]
\centerline{
 \fbox{
\begin{tikzpicture}
\filldraw[draw=black,fill=black!20] (0,0) rectangle (10,1);
\filldraw[draw=black,fill=black!20] (4,-1) rectangle (14,0);
\draw[dashed] (2,1) --(2,0);
\node [below] at (2,-1.5) {$i$};
\node [below] at (9,-2) {$n$};
\node [above] at (5,2.5) {$n$};
\node [below] at (9,0) {$\mathcal U_n=[x^1]_n\sqcup\cdots\sqcup[x^{m_n}]_n$};
\node [above] at (1,1.5) {$[i/2]$};
\draw[] (14,0.5) --(10,0.5);
\node [above] at (12,0.5) {$\sigma^i$};
\node [below] at (1,0.8) {$\mathcal U_{[i/2]}$};
\draw[decorate, decoration={brace, mirror, raise=6pt}]
    (0,-1) --(4,-1);
\draw[decorate, decoration={brace, mirror, raise=6pt}]
    (10,2) --(0,2);
\draw[decorate, decoration={brace, mirror, raise=6pt}]    
     (2,1)--(0,1);
\draw[decorate, decoration={brace, mirror, raise=6pt}]    
     (4,-1.5) --(14,-1.5);
\end{tikzpicture}
}}
\caption{Proof of the inequality (\ref{26_Jun_2015}).}\label{help_26_Jun}
\end{figure}
The case $\eta_n=n$ is trivial, so suppose that $\eta_n<n.$ It is clear that
$$
\begin{aligned}
&\left| \E\left(\1_{\mathcal{U}_n}\prod_{i=n}^q \1_{\mathcal{U}_n^c}\circ \sigma^i \right)- \E\left(\1_{\mathcal{U}_n}\prod_{i=\eta_n}^q \1_{\mathcal{U}_n^c}\circ \sigma^i \right)\right| \leq \sum_{i=\eta_n}^{n-1}\E\left(\1_{\mathcal{U}_n}\cdot \1_{\mathcal{U}_n}\circ\sigma^i\right).
\end{aligned}
$$

In Figure \ref{help_26_Jun} we have chosen $i\in \{ \eta_n ,\ldots, n-1\} $ and we represent the set $\mathcal{U}_n=[x^1]_n\sqcup\cdots\sqcup[x^{m_n}]_n$ for some $x^1,\ldots,x^{m_n}\in \mathcal X,$ $m_n\in\N$ and the set $\sigma^i\mathcal{U}_n.$ We have also draw a representation of the set $\mathcal{U}_{[i/2]}.$ We can see that the action of the shift moved the rectangle at the bottom to the left, the result is the rectangle that we draw at the top. It is clear that
$$
\E\left(\1_{\mathcal{U}_n}\cdot \1_{\mathcal{U}_n}\circ\sigma^i\right)\leq \E\left(\1_{\mathcal{U}_n}\cdot \1_{\mathcal{U}_{[i/2]}}\circ\sigma^i\right)=\mu\left(\mathcal{U}_n\cap \sigma^{-i}\mathcal{U}_{[i/2]}\right). 
$$
From the $\psi$-mixing condition of the measure $\mu$ we obtain that 
$$
|\mu(\mathcal{U}_n\cap \sigma^{-i}\mathcal{U}_{[i/2]})-\epsilon_n \epsilon_{[i/2]} |\leq \psi_{[i/2]} \epsilon_{[i/2]}\epsilon_n,
$$ 
and therefore 
\begin{equation}\label{26_Jun_2015}
\begin{aligned}
\sum_{i=\eta_n}^{n-1}\E\left(\1_{\mathcal{U}_n}\cdot \1_{\mathcal{U}_n}\circ\sigma^i\right) & \leq \sum_{i=\eta_n}^{n-1} \left(\psi_{[i/2]}
\epsilon_{[i/2]} \epsilon_{n} +\epsilon_n \epsilon_{[i/2]} \right)\\
& \leq n\epsilon_n \epsilon_{\left\lfloor \eta_n/2 \right\rfloor}(1+\psi_{\left\lfloor \eta_n/2 \right\rfloor}).
\end{aligned}
\end{equation}
\end{proof}

The proof of Corollary \ref{prop2_10032015} comes from the observation that for $q\in \{ n ,\ldots, N-1\} $ we have

$$
\begin{aligned}
&\left|\epsilon_n\E\left( \prod_{i=1}^{q}\1_{\mathcal{U}_n^c}\circ \sigma^i  \right) -\E\left( \1_{\mathcal{U}_n} \prod_{i=1}^{q}\1_{\mathcal{U}_n^c}\circ \sigma^i  \right)\right| \cr
& \leq \left|\epsilon_n\E\left( \prod_{i=1}^{q}\1_{\mathcal{U}_n^c}\circ \sigma^i  \right)-\epsilon_n\E\left( \prod_{i=n}^{q}\1_{\mathcal{U}_n^c}\circ \sigma^i  \right)\right|\cr
&\quad +\left|\epsilon_n\E\left( \prod_{i=n}^{q}\1_{\mathcal{U}_n^c}\circ \sigma^i  \right) -\E\left( \1_{\mathcal{U}_n} \prod_{i=n}^{q}\1_{\mathcal{U}_n^c}\circ \sigma^i  \right)\right|\cr
&\quad +\left| \E\left(\1_{\mathcal{U}_n}\prod_{i=n}^q \1_{\mathcal{U}_n^c}\circ \sigma^i \right)- \E\left(\1_{\mathcal{U}_n}\prod_{i=\eta_n}^q \1_{\mathcal{U}_n^c}\circ \sigma^i \right)\right| \cr
&:= S_{21}(N,q) +S_{22}(N,q)+S_{\text{extra}}(N,q).
\end{aligned}
$$
We can use Lemma \ref{lem2:21Mar} to bound $S_{\text{extra}}(N,q)$ and the proof of Corollary \ref{prop2_10032015} follows directly from the proof of Proposition \ref{prop1_10032015}.

\subsection{Theorem \ref{teo:29062015} }\label{sub_3_25_4_16}

We can use the same notation and definitions used in the previous proofs and write $$\mu(\tau_n>N)-(1-\epsilon_n)^N=S_1(N)+S_2(N),$$
where $$S_2(N)\leq\sum_{q=n}^{N-1} S_{21}(N,q)+S_{22}(N,q)+S_{\text{extra}}(N,q) \mbox{ and}$$
$$S_{\text{extra}}(N,q) :=\left| \E\left(\1_{\mathcal{U}_n}\prod_{i=n}^q \1_{\mathcal{U}_n^c}\circ \sigma^i \right)- \E\left(\1_{\mathcal{U}_n}\prod_{i=\eta_n}^q \1_{\mathcal{U}_n^c}\circ \sigma^i \right)\right| \leq \sum_{i=\eta_n}^{2n-1}\E\left(\1_{\mathcal{U}_n}\cdot \1_{\mathcal{U}_n}\circ\sigma^i\right).$$
Notice that the case \ref{c1_250216}. is exactly the condition required in Corollary \ref{prop2_10032015}. Therefore, we only need to prove the theorem in the cases \ref{c2_250216}. and \ref{c3_250216}. The unique difference in these cases and the one of First corollary is that we need to find a new bound for $\E\left(\1_{\mathcal{U}_n}\cdot \1_{\mathcal{U}_n}\circ\sigma^i\right)$ when $i\in \{ \eta_n ,\ldots, 2n\}.$ 

\begin{lemma} 
If $(\mathcal{X},\B_{\mathcal{X}},\mu,\sigma,\{\mathcal{U}_n\},\{X_n\})$ is a $M_a$-system where $\mathcal{U}_n$ is in the sigma algebra generated by $\prod_{-n}^{n-1}\{1,\ldots,a\}$ for every $n\geq 1.$  Then, for every $n\geq 1$
$$
\begin{aligned}
&\E\left(\1_{\mathcal{U}_n}\cdot \1_{\mathcal{U}_n}\circ\sigma^i\right) \\
&\leq\begin{cases}
\mu(\mathcal{U}_n)\mu\left(\mathcal{U}_{k}\right)+ \psi_{\triangle} \epsilon_k \epsilon_n &\mbox{ if }\eta_n=n+k+\triangle \mbox{ with } k+\triangle \geq 1,\\
\mu(\mathcal V_n) \mu(\mathcal U_n)+\psi_{\left\lfloor \eta_n/2 \right\rfloor} \mu(\mathcal{V}_{n}) \epsilon_n &\mbox{ if } \eta_n<n+1, 
\end{cases}
\end{aligned}
$$
for all $i\geq  \eta_n.$
\end{lemma}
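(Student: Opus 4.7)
My plan is to reduce each of the two bounds to a single application of the $\psi$-mixing inequality. The key idea in both cases is to enlarge one factor of $\1_{\mathcal{U}_n}$ to the indicator of an event supported on a strictly smaller coordinate window, so that after applying $\sigma^{-i}$ the two events have well-separated supports and $\psi$-mixing applies with a usable gap.

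\textbf{Case $\eta_n=n+k+\triangle$ with $k+\triangle\ge 1$.} I would use the (implicit) nested structure $\mathcal{U}_n\subset\mathcal{U}_k$ inherent to the family, which gives $\sigma^{-i}\mathcal{U}_n\subset\sigma^{-i}\mathcal{U}_k$ and hence
\[
\E(\1_{\mathcal{U}_n}\,\1_{\mathcal{U}_n}\circ\sigma^i)\le\mu(\mathcal{U}_n\cap\sigma^{-i}\mathcal{U}_k).
\]
Here $\mathcal{U}_n$ is in the sigma-algebra of coordinates $[-n,n-1]$ and $\sigma^{-i}\mathcal{U}_k$ in that of $[i-k,i+k-1]$. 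Since $i\ge \eta_n=n+k+\triangle$, the gap between these two windows is $i-k-n\ge\triangle$, so by $\psi$-mixing
\[
\mu(\mathcal{U}_n\cap\sigma^{-i}\mathcal{U}_k)\le(1+\psi_\triangle)\mu(\mathcal{U}_n)\mu(\mathcal{U}_k)=\mu(\mathcal{U}_n)\mu(\mathcal{U}_k)+\psi_\triangle\,\epsilon_k\epsilon_n.
\]

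\textbf{Case $\eta_n<n+1$.} I would use the set $\mathcal{V}_n\supset\mathcal{U}_n$ from condition~(\ref{c3_250216}) of Theorem~\ref{teo:29062015}. Suppose $\mathcal{V}_n$ sits in the sigma-algebra of $[n-\lfloor\eta_n/2\rfloor,n-1]$ (the left-end choice is treated symmetrically, enlarging the first factor instead of the second). Since $\sigma^{-i}\mathcal{U}_n\subset\sigma^{-i}\mathcal{V}_n$,
\[
\E(\1_{\mathcal{U}_n}\,\1_{\mathcal{U}_n}\circ\sigma^i)\le\mu(\mathcal{U}_n\cap\sigma^{-i}\mathcal{V}_n).
\]
Now $\mathcal{U}_n$ lives on $[-n,n-1]$ and $\sigma^{-i}\mathcal{V}_n$ on $[\,i+n-\lfloor\eta_n/2\rfloor,\,i+n-1\,]$; for $i\ge\eta_n$ the gap is at least $i-\lfloor\eta_n/2\rfloor\ge\lceil\eta_n/2\rceil\ge\lfloor\eta_n/2\rfloor$. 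Applying $\psi$-mixing,
\[
\mu(\mathcal{U}_n\cap\sigma^{-i}\mathcal{V}_n)\le(1+\psi_{\lfloor\eta_n/2\rfloor})\mu(\mathcal{U}_n)\mu(\mathcal{V}_n)=\mu(\mathcal{V}_n)\mu(\mathcal{U}_n)+\psi_{\lfloor\eta_n/2\rfloor}\mu(\mathcal{V}_n)\epsilon_n.
\]

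The main obstacle is justifying the nesting $\mathcal{U}_n\subset\mathcal{U}_k$ used in the first case, as it is not part of the stated hypotheses and must be extracted from the ambient setting of Theorem~\ref{teo:29062015} (condition~(\ref{c2_250216}) uses $k=k(n)$, $\triangle=1$), presumably from the fact that $\mathcal{U}_n$ is defined as a refinement of $\mathcal{U}_{k(n)}$ by additional constraints on the outer coordinates. Once this inclusion is in place, both cases collapse into essentially the same gap-counting plus $\psi$-mixing computation.
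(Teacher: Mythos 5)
Your proof is correct and follows essentially the same route as the paper's: replace one copy of $\1_{\mathcal{U}_n}$ by the indicator of a superset ($\mathcal{U}_k$ or $\mathcal{V}_n$) supported on a narrower coordinate window, then apply the $\psi$-mixing bound across the resulting gap of size $\triangle$ (resp. $\left\lfloor \eta_n/2 \right\rfloor$); the paper merely enlarges the first factor where you enlarge the second, which is an immaterial symmetry. The nesting $\mathcal{U}_n\subset\mathcal{U}_k$ that you flag is likewise used without explicit justification in the paper's own proof (it is read off from the accompanying figure), so your concern points to a hypothesis the paper leaves implicit rather than to a defect in your argument.
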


\begin{proof}
Let us fix $n\in \N.$ We have two cases: $\eta_n=n+k+\triangle \mbox{ with } k+\triangle \geq 1$ or   
$\eta_n<n+1.$ Suppose first that $\eta_n=n+k+\triangle \mbox{ with } k+\triangle \geq 1$ and that $i\in \{ \eta_n ,\ldots, 2n\},$ then $$\E\left(\1_{\mathcal{U}_n}\cdot \1_{\mathcal{U}_n}\circ\sigma^i\right)\leq \E\left(\1_{\mathcal{U}_k}\cdot \1_{\mathcal{U}_n}\circ\sigma^i\right)=\mu\left( \mathcal{U}_k\cap \sigma^{-i}\mathcal{U}_n\right)$$
and 
\begin{equation}\label{eq1:30062015}
\begin{aligned}
| \mu\left( \mathcal{U}_k\cap \sigma^{-i}\mathcal{U}_n\right)-\mu(\mathcal{U}_k) \mu(\mathcal{U}_n) |\leq \psi_{\triangle} \epsilon_k\epsilon_n.
\end{aligned}
\end{equation}
Therefore $$\E\left(\1_{\mathcal{U}_n}\cdot \1_{\mathcal{U}_n}\circ\sigma^i\right)\leq \mu(\mathcal{U}_k) \mu(\mathcal{U}_n)+\psi_{\triangle} \epsilon_k \epsilon_n.$$
In Figure \ref{segunda_help_27_Jun} we represented the sets $\mathcal{U}_n\cap \sigma^{-i}\mathcal{U}_n$ and $\mathcal{U}_k\cap \sigma^{-i}\mathcal{U}_n.$ The light grey rectangle at the top represents the set $\mathcal{U}_n$ and the one at the bottom the set $\sigma^{-i}\mathcal{U}_n.$ The darker grey rectangle represents the set $\mathcal{U}_k\supset \mathcal{U}_n.$ The ``gap'' $\triangle$ between the coordinates fixed by the sets $\mathcal{U}_k$ and  $\sigma^{-i}\mathcal{U}_n$ allows to use the $\psi$-mixing condition of the measure $\mu$ to conclude inequality (\ref{eq1:30062015}).\\
\begin{figure}[!]
\centerline{
\fbox{
\begin{tikzpicture}
\filldraw[draw=black,fill=black!20] (-1,0) rectangle (9,1);
\filldraw[draw=black,fill=black!20] (6,-1) rectangle (16,0);
\filldraw[draw=black,fill=black!40] (3,0) rectangle (5,1);
\draw[dashed] (4,1) --(4,0);
\node [below] at (2.5,-2.5) {$i$};
\node [below] at (11,-3) {$2n$};
\node [above] at (4,3.5) {$2n$};
\node [above] at (4,2.5) {$2k$};
\node [below] at (5.5,-1.5) {$\triangle$};
\node[right] (5,-4) {}
   edge[] (11,-0.5);
\node[above] (10,4) {}
   edge[] (4,0.5);      
\node [below] at (5,-4) {$\mathcal U_n=[x^1_{-n},\ldots,x_0^1,\ldots, x^1_{n-1}]_{-n}^{n}\sqcup\ldots \sqcup[x^{m_n}_{-n},\ldots,x^{m_n}_0,\ldots, x^{m_n}_{n-1}]_{-n}^{n}$};
\node [above] at (10,4) {$\mathcal U_k=[x^1_{-k},\ldots,x^1_0,\ldots, x^1_{k-1}]_{-k}^{k}\sqcup\ldots\sqcup[x^{m_k}_{-k},\ldots,x^{m_k}_0,\ldots, x^{m_k}_{k-1}]_{-k}^{k}$};
\draw[] (16,0.5) --(9,0.5);
\node [above] at (12.5,0.5) {$\sigma^i$};
\draw[decorate, decoration={brace, mirror, raise=6pt}]
    (-1,-2) --(6,-2);
\draw[decorate, decoration={brace, mirror, raise=6pt}]
    (9,3) --(-1,3);
\draw[decorate, decoration={brace, mirror, raise=6pt}]
    (5,2) --(3,2);
\draw[decorate, decoration={brace, mirror, raise=6pt}]    
     (6,-2.5) --(16,-2.5);
\draw[decorate, decoration={brace, mirror, raise=6pt}]    
     (5,-1) --(6,-1);\end{tikzpicture}
}}
\caption{Proof of the inequality (\ref{eq1:30062015}).}\label{segunda_help_27_Jun}
\end{figure}

Suppose now that $\eta_n<n+1$ and that $\mathcal V_n$ has coordinates fixed only in $\{-n,\ldots,-n+\left\lfloor \eta_n/2 \right\rfloor\}$ (the case that $\mathcal V_n$ has coordinates fixed only in $\{n-\left\lfloor \eta_n/2 \right\rfloor,\ldots,n-1\}$ is similar), then $$\E\left(\1_{\mathcal{U}_n}\cdot \1_{\mathcal{U}_n}\circ\sigma^i\right)\leq \E\left(\1_{\mathcal{V}_n}\cdot \1_{\mathcal{U}_n}\circ\sigma^i\right)=\mu\left( \mathcal{V}_n\cap \sigma^{-i}\mathcal{U}_n\right)$$
and 
\begin{equation}\label{eq2:30062015}
\begin{aligned}
| \mu\left( \mathcal{V}_n\cap \sigma^{-i}\mathcal{U}_n\right)-\mu(\mathcal V_n) \mu(\mathcal{U}_n) |\leq \psi_{\left\lfloor \eta_n/2 \right\rfloor} \mu(\mathcal V_{n}) \epsilon_n.
\end{aligned}
\end{equation}
Therefore $$\E\left(\1_{\mathcal{U}_n}\cdot \1_{\mathcal{U}_n}\circ\sigma^i\right)\leq \mu(\mathcal V_n) \mu(\mathcal U_n)+\psi_{\left\lfloor \eta_n/2 \right\rfloor} \mu(\mathcal{V}_{n})\epsilon_n.$$

The gap $\left\lfloor \eta_n/2 \right\rfloor$ between the coordinates fixed by the sets $\mathcal{V}_n$ and  $\sigma^{-i}\mathcal{U}_n$ allows to use the $\psi$-mixing condition of the measure $\mu$ to conclude inequality (\ref{eq2:30062015}).

\end{proof}

\section*{Acknowledgments}
This work constitutes part of my Ph.D., which was supported by CONICYT. I would like to especially thank my supervisor Mark Pollicott for bringing my attention to the content of this work, many useful discussions and reading draft versions. I would also like to thank Thomas Jordan, Ian Melbourne and the anonymous reviewer for suggesting the final version of this work.

\end{document}